\documentclass{amsart}

\usepackage{hyperref}
\usepackage[shortcuts]{extdash}
\usepackage{thmtools}
\usepackage{amssymb}
\usepackage{mathtools}
\usepackage{eucal}
\usepackage{tikz-cd}
\usepackage{cleveref}
\usetikzlibrary{positioning,fit,shapes.geometric}

\usepackage[
	natbib=true,
	style=alphabetic,
	url=false,
	backend=bibtex
]{biblatex}
\renewbibmacro{in:}{}
\makeatletter
\renewcommand\operator@font{\sf}
\makeatother

\numberwithin{equation}{section}

\declaretheorem[sibling=equation]{theorem}
\declaretheorem[sibling=equation]{proposition}
\declaretheorem[sibling=equation]{lemma}
\declaretheorem[sibling=equation]{corollary}
\declaretheorem[style=definition,sibling=equation]{definition}
\declaretheorem[style=remark,sibling=equation]{remark}
\declaretheorem[style=remark,sibling=equation]{example}

\newcommand{\T}{\mathcal{T}}
\newcommand{\A}{\mathcal{A}}
\newcommand{\C}{\mathcal{C}}
\newcommand{\Set}{\operatorname{Set}}
\newcommand{\Aut}[1]{\operatorname{Aut}(#1)}

\newcommand{\Out}[1]{\operatorname{Out}(#1)}
\newcommand{\Pic}[1]{\operatorname{Pic}(#1)}
\newcommand{\Picst}[1]{\underline{\operatorname{Pic}}(#1)}
\renewcommand{\mod}[1]{\operatorname{mod}(#1)}

\newcommand{\proj}[1]{\operatorname{proj}(#1)}
\newcommand{\modst}[1]{\underline{\operatorname{mod}}(#1)}

\newcommand{\D}[1]{D(#1)}
\newcommand{\id}[1]{\operatorname{id}_{#1}}
\newcommand{\coker}{\operatorname{coker}}

\renewcommand{\L}[1]{\mathbb{L}_{#1}}
\newcommand{\Z}{\mathbb{Z}}
\newcommand{\Q}{\mathbb{Q}}
\newcommand{\Fp}[1]{\mathbb{F}_{#1}}
\newcommand{\characteristic}{\operatorname{char}}
\newcommand{\rad}[1]{\operatorname{rad}(#1)}
\newcommand{\soc}[1]{\operatorname{soc}(#1)}

\newcommand{\HH}[3]{\operatorname{HH}^{#1}_{#3}(#2)}

\newcommand{\B}[3]{\operatorname{B}^{#3}_{#1}(#2)}
\newcommand{\nB}[3]{\bar{\operatorname{B}}^{#3}_{#1}(#2)}
\newcommand{\HC}[3]{\operatorname{C}_{#3}^{#1}(#2)}
\newcommand{\nHC}[2]{\bar{\operatorname{C}}^{#1}(#2)}

\newcommand{\HML}[2]{\operatorname{HML}^{#1}(#2)}
\newcommand{\HMLhomology}[2]{\operatorname{HML}_{#1}(#2)}
\renewcommand{\hom}{\operatorname{Hom}}
\newcommand{\homst}{\underline{\hom}}
\newcommand{\ext}{\operatorname{Ext}}

\newcommand{\edge}[1]{\operatorname{e}_{#1}}

\DeclareRobustCommand{\svdots}{
  \vbox{%
    \baselineskip=0.33333\normalbaselineskip
    \lineskiplimit=10pt
    \hbox{\footnotesize.}\hbox{\footnotesize.}\hbox{\footnotesize.}%
    \kern-0\baselineskip
  }%
}

\title[An exotic finite pretriangulated category]{An exotic finite pretriangulated category over any algebraically closed field}

\author[J.~Díaz Cabrera]{Javier Díaz Cabrera}
\author[F.~Muro]{Fernando Muro}
\address{Universidad de Sevilla,
Facultad de Matemáticas,
Departamento de Álgebra,
Calle Tarfia s/n,
41012 Sevilla, Spain}
\email[J.~Díaz Cabrera]{jdiaz7@us.es}
\email[F.~Muro]{fmuro@us.es}
\urladdr[F. Muro]{https://personal.us.es/fmuro/}

\begin{document}

\begin{abstract}
We show that, over any algebraically closed field of any characteristic, the category of finite-dimensional projective modules over the preprojective algebra of generalized Dynkin type $\mathbb{L}_2$ has a pretriangulated category structure which is neither an algebraic nor a topological triangulated category structure.
\end{abstract}

\maketitle

\tableofcontents

\section{Introduction}\label{sec:introduction}

Triangulated categories play an essential role in different branches of mathematics. They were independently introduced by Puppe \cite{Puppe_1962} and Verdier \cite{Verdier_1977} in the early 1960s. Verdier encoded an extra axiom though, the \emph{octahedral axiom}. Ever since, Verdier's axiomatic framework became standard. Puppe's notion, i.e.~not requiring the octahedral axiom, is often referred to as a \emph{pretriangulated category} \cite[Definition 1.1.2]{neeman_2001_triangulated_categories}. Here, we adopt this terminology, which already appears in the title.

Triangulated categories usually arise in nature as stable categories of Frobenius abelian categories or as homotopy categories of stable model categories, or as full triangulated subcategories of those. The first kind of triangulated categories are called \emph{algebraic} \cite{keller_2007_differential_graded_categories} and the second kind are called \emph{topological} \cite{schwede_2010_algebraic_topological_triangulated}. A (pre)triangulated category which is not an algebraic or topological triangulated category is called \emph{exotic}. Few examples of exotic (pre)triangulated categories are known.

The first family of exotic triangulated categories was discovered in \cite{muro_schwede_strickland_2007_triangulated_categories_models}. They are categories of finitely generated projective modules over $R=\Z/(4)$ and similar rings. They have a triangulated category structure with identity shift functor whose exact triangles are the direct sums of trivial triangles and copies of the following one:
\[R\stackrel{2}{\longrightarrow}R\stackrel{2}{\longrightarrow}R\stackrel{2}{\longrightarrow}R.\]

The second class of exotic triangulated categories was found in \cite{Rizzardo2020}. They are defined over a field $k$ of characteristic $0$. These examples have a semi-orthogonal decomposition $\langle\D{K},\D{R_\eta}\rangle$ where $R=k[x_1,\dots,x_n]$, $n\geq 3$, $K$ is the field of fractions of $R$, and $R_\eta$ is a $k[\varepsilon]/(\varepsilon^2)$-linear $A_\infty$-deformation of $R[\varepsilon]/(\varepsilon^2)$ with $\varepsilon$ a cocycle of degree $2-n$.

Two examples of pretriangulated categories which are not triangulated have been recently discovered \cite{Chen_Liu_Lu_Zhang_2026, Name_Name_2026}, the second one by an anonymous author. They are defined over characteristic $2$ fields. Both papers use the same techniques. The first one acknowledges the use of AI, and the second one is computer-assisted.

We here present a new class of exotic pretriangulated categories, defined over any algebraically closed field of arbitrary characteristic.

The \emph{preprojective algebra of generalized Dynkin type $\L{2}$} over a field $k$ is the algebra $P(\L{2})$ of the following quiver with relations, see \cite{bialkowski_erdmann_skowronski_2007_deformed_preprojective_algebras},
\begin{equation}\label{eq:presentation}
\begin{tikzcd}
1 \arrow[r, shift left, "a_1"] \arrow[loop right, "b"', in=210,out=150,looseness=5] & 2, \arrow[l, shift left, "a_2"]
\end{tikzcd}
\qquad
a_1a_2=b^2,\qquad a_2a_1=0.
\end{equation}
It coincides with the first \emph{penny-farthing algebra} introduced in \cite[\S13.12]{gabriel_roiter_1997_representations_finitedimensional_algebras} and, if $k$ is algebraically closed, with the stable Auslander--Reiten algebra of the simple plane curve singularity $k[[x,y]]/(x^5+y^2)$ \cite{dieterich_wiedemann_1986_auslanderreiten_quiver_simple}.

\begin{theorem}\label{thm:main}
	Let $k$ be any algebraically closed field. The category $\proj{P(\L{2})}$ of finite\-/dimensional projective right modules over $P(\L{2})$ has a pretriangulated category structure which is neither an algebraic nor a topological triangulated category structure.
\end{theorem}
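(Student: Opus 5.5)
The plan follows Amiot's and Muro's framework for finite (pre)triangulated categories with finitely many indecomposables. Existence comes from Hochschild--Mac Lane cohomology; non-algebraicity and non-topologicity come from restrictions on the universal Toda bracket.

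\textbf{Step 1: the shift and the relevant cohomology group.} Write $\Lambda=P(\L{2})$ and $\T=\proj{\Lambda}$. A pretriangulated structure on $\T$ is determined by two pieces of data:
\begin{itemize}
\item an automorphism $\Sigma$, which restricts to an invertible $\Lambda$-bimodule $\Lambda_\sigma$ twisted by an algebra automorphism $\sigma$;
\item a class $\gamma\in\HML{3,-1}{\Lambda,\Lambda_\sigma}$, the universal Toda bracket, subject to Muro's condition.
\end{itemize}
Muro's condition says that $\gamma$ restricts to a unit in the relevant Hochschild cohomology group $\HH{3}{\Lambda}{\Lambda_\sigma}$. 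Concretely, $\gamma$ must be represented by a $3$-extension
\[0\to\Lambda_\sigma\to P_2\to P_1\to P_0\to\Lambda\to0\]
of $\Lambda$-bimodules with projective middle terms; that is, $\Lambda$ must be twisted periodic of period dividing $3$.

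Since $\Lambda$ is a (generalized) preprojective algebra of Dynkin type $\L{2}$, it is self-injective, and Bia{\l}kowski--Erdmann--Skowro\'nski show it is periodic. I will write down the minimal bimodule resolution explicitly from the quiver with relations~\eqref{eq:presentation}. Its first terms are
\[P_0=\textstyle\bigoplus_i\Lambda e_i\otimes e_i\Lambda,\qquad P_1=\text{one summand per arrow},\qquad P_2=\text{one summand per relation}.\]
The kernel of $P_2\to P_1$ should then be $\Lambda_\sigma$, for an explicit $\sigma$ that is essentially the Nakayama-type twist, possibly with signs. This gives a class $\gamma$ whose restriction is a unit, hence a pretriangulated structure with suspension induced by $\sigma$. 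Over an algebraically closed field this also avoids Galois-descent issues. The comparison between Hochschild and Hochschild--Mac Lane cohomology in degree $3$ should hold in all characteristics, because the obstruction lives in a group that is computed, or vanishes, from the resolution.

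\textbf{Step 2: non-algebraicity.} An algebraic structure on $\T$ comes from a dg (or $A_\infty$) enhancement. Its universal Toda bracket is therefore the image of a Hochschild class, and the associated $A_\infty$-structure satisfies the octahedral axiom. My first attempt is to show that no triangulated structure exists on $\T$ with this shift, so that the structure is not even triangulated; in particular it is neither algebraic nor topological. Amiot's classification gives the route: finite algebraic triangulated categories with finitely many indecomposables are orbit categories of $D^b(\mod{kQ})$ for $Q$ Dynkin. In each such category, the endomorphism algebra of a basic additive generator is a known algebra: a preprojective algebra of Dynkin type, or a twisted or $\Sigma$-orbit version of one. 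I then need to check that $\L{2}$ does not appear in this list. Here I use that $\L{2}$ is not of ordinary Dynkin type and that Amiot's list consists of the stable Auslander algebras of Dynkin orbit categories. Alternatively, I verify directly that the octahedral axiom fails for a specific pair of composable morphisms, say $b$ and $a_1$. I would compute the cones from $\gamma$ using the resolution, and show that no octahedron exists because the required third triangle would force a relation contradicting $a_2a_1=0$.

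\textbf{Step 3: non-topologicity, and the expected obstacle.} Topological triangulated categories satisfy the octahedral axiom, so the failure of the octahedral axiom from Step 2 also handles this case. If the structure happens to be triangulated, I would instead use Schwede's observation that in topological categories $n\cdot\mathrm{id}$ on the cone of $n$ is divisible by $\eta$. Translated to $k$-linear finite categories, this says $\gamma$ lifts to $\HML{3,-1}{\Lambda,\Lambda_\sigma}$ from the image of the topological Hochschild cohomology map. I would then show that our $\gamma$ is not in that image: in characteristic $p$ this comes down to a Bockstein- or $\eta$-type obstruction, and in characteristic $0$ it follows from the algebraic case.

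The main obstacle is Step 2. Either I must verify the octahedral failure by hand, which requires explicit computation of the triangles of the exotic structure, or I must compare carefully with Amiot's list across all characteristics, including $2$, where extra twisted forms appear.
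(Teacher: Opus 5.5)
\textbf{Gap: Steps 1 and 2 work with the algebraic structure.} A Toda bracket represented by a bimodule extension $\Lambda_\sigma\hookrightarrow P_2\to P_1\to P_0\twoheadrightarrow\Lambda$, that is, by an edge unit in Hochschild cohomology, is exactly the input of Amiot's construction. It produces an \emph{algebraic} triangulated structure. The twist you would read off the Bia{\l}kowski--Erdmann--Skowro\'nski resolution is $\nu$ from \eqref{eq:nu_automorphism}, up to inner automorphisms. So what you obtain is the essentially unique algebraic structure on $\proj{P(\L{2})}$.

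Step 2 therefore aims at a false statement. A triangulated, indeed algebraic, structure with that shift does exist, so neither exclusion from Amiot's list nor a failure of the octahedral axiom can be shown for it. Even for the genuinely exotic structure, octahedral failure is not an available route: the paper leaves open whether the axiom holds there, and its computer tests support it.

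Your claim that Hochschild and MacLane cohomology agree in degree $3$ in every characteristic is also false. In characteristic $p$ there is an extra summand $\HML{1}{\Lambda,\Lambda_\sigma}$ (Baues--Pirashvili). Over non-prime fields there are further summands involving $\HH{>0}{k,k}{\ell}$. Controlling exactly this discrepancy is the content of any non-topologicality argument.

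\textbf{Missing idea: the obstruction is in the shift functor.} By Theorem~\ref{thm:Heller}, pretriangulated structures on $\proj{\Lambda}$ with inverse shift $(-)_\sigma$ correspond to isomorphisms of exact functors $(\Omega^3,-\id{\Omega^4})\cong((-)_\sigma,\varphi_\sigma)$ on $\modst{\Lambda}$. Algebraic structures, by contrast, are essentially unique, with inverse shift $(-)_\nu$. So you need an outer automorphism $\phi$ whose induced self-equivalence of $\modst{\Lambda}$ is isomorphic to the identity as an exact functor. Composing the algebraic $\delta$ with that isomorphism gives a pretriangulated structure with inverse shift $(-)_{\nu\phi}$. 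It is not equivalent to the algebraic one, because $[\nu\phi]\neq[\nu]$ in $\Out{\Lambda}\cong\Pic{\Lambda}$.

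The paper takes $\phi(b)=b+b^3$:
\begin{itemize}
\item it is outer, since $e_1\Lambda e_1$ is commutative and $b^3\neq 0$;
\item it fixes every non-projective indecomposable, since the socle annihilates them;
\item compatibility with $\varphi_\phi$ is checked on the stable Auslander--Reiten quiver, where two cases need explicit maps.
\end{itemize}

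\textbf{Non-topologicality.} Your Step 3, via Schwede's $\eta$-divisibility, does not give an argument. What works is this. A topological structure would have $\delta$ induced, through edge morphisms, by a single class in $\HML{3}{\Lambda,\Lambda_\sigma}$. Use the integral form $P_{\Z}(\L{2})$, a $\Z/(p^2)$-lift via $W_2(k)$, and the splittings above. One then shows that the edge morphisms kill every summand except $\HH{3}{\Lambda,\Lambda_\sigma}{k}$. Hence $\delta$ would come from a Hochschild edge unit, forcing $\Lambda_\sigma\cong\Omega^3_{\Lambda^e}\Lambda\cong\Lambda_\nu$ and so $[\phi]=1$, a contradiction.
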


The inverse shift functor $P\mapsto P_{\sigma}$ is the restriction of scalars along the following algebra automorphism:
\begin{equation}\label{eq:sigma_automorphism}
	\sigma\colon P(\L{2}) \stackrel{\cong}{\longrightarrow} P(\L{2})\colon
	e_1 \mapsto e_1,
	e_2 \mapsto e_2,
	a_1 \mapsto -a_1,
	a_2 \mapsto -a_2,
	b \mapsto -b-b^3.
\end{equation}
Here $e_1, e_2$ are the idempotents corresponding to the vertices.
Moreover, the following triangles are exact and any other exact triangle is isomorphic to a direct sum of copies of these and their translations. Here we write $P^1=e_1P(\L{2})$ and $P^2=e_2P(\L{2})$ for the indecomposable projectives.
\begin{equation}\label{eq:triangles}
	\begin{gathered}
		0\xrightarrow{}P^1\xrightarrow{\id{}}P^1\xrightarrow{} 0,\\
		0\xrightarrow{}P^2\xrightarrow{\id{}}P^2\xrightarrow{} 0,\\
		P^1_\sigma\xrightarrow{b^3}P^1\xrightarrow{\left(\begin{smallmatrix}b\\-a_2\end{smallmatrix}\right)}
		P^1\oplus P^2\xrightarrow{\left(\begin{smallmatrix}b&a_1\end{smallmatrix}\right)} P^1,\\
		P^1_\sigma\xrightarrow{a_2b}P^2\xrightarrow{ba_1} P^1\xrightarrow{b} P^1,\\
		P^1_\sigma\oplus P^2_\sigma\xrightarrow{\left(\begin{smallmatrix}-b^2&-ba_1\\a_2b&0\end{smallmatrix}\right)}P^1\oplus P^2\xrightarrow{\left(\begin{smallmatrix}-b^2&0\\b&a_1\end{smallmatrix}\right)} P^1\oplus P^1\xrightarrow{\left(\begin{smallmatrix}b&b^2\\-a_2&0\end{smallmatrix}\right)} P^1\oplus P^2,\\
		P^1_\sigma\xrightarrow{\left(\begin{smallmatrix}-b^2\\a_2b\end{smallmatrix}\right)}P^1\oplus P^2\xrightarrow{\left(\begin{smallmatrix}b&a_1\\-a_2&0\end{smallmatrix}\right)} P^1\oplus P^2\xrightarrow{\left(\begin{smallmatrix}b^2&ba_1\end{smallmatrix}\right)} P^1,\\
		P^1_{\sigma}\xrightarrow{b^2}P^1\xrightarrow{\left(\begin{smallmatrix}-a_2b\\a_2\end{smallmatrix}\right)}P^2\oplus P^2\xrightarrow{\left(\begin{smallmatrix}a_1&ba_1\end{smallmatrix}\right)}P^1,\\
		P^2_\sigma\xrightarrow{-a_2ba_1}P^2\xrightarrow{a_1}P^1\xrightarrow{a_2}P^2.
	\end{gathered}
\end{equation}

\Cref{thm:main} is proved in the following way. In \Cref{thm:non-algebraic}, we give a criterion for the existence of a non-algebraic pretriangulated category structure on the category $\proj{\Lambda}$ of finite\-/dimensional projective modules over a finite\-/dimensional algebra $\Lambda$. The obstruction is in the shift functor, i.e.~we show that there cannot be an algebraic triangulated category structure with a given shift functor. In \Cref{cor:non-algebraic} we apply this criterion to $\Lambda=P(\L{2})$. Finally, we show in \Cref{thm:non-topological} that, if $\Lambda=\Lambda^{\Z}\otimes_{\Z}k$ for some ring $\Lambda^{\Z}$ which is free as a $\Z$-module, then any non-algebraic pretriangulated category structure on $\proj{\Lambda}$ arising from \Cref{thm:non-algebraic} is not topological either. This applies to $\Lambda=P(\L{2})$, see \Cref{cor:non-topological}.

The examples in \cite{Chen_Liu_Lu_Zhang_2026, Name_Name_2026} are built upon $\proj{P(\mathbb{A}_{5})}$ and $\proj{P(\mathbb{D}_{4})}$. Despite the similarity of the underlying categories, their techniques are totally different from ours. They start with the known algebraic triangulated structures on these categories and tweak the exact triangles in order to construct the exotic structures with the same shift functor. In our case, we show that $\proj{P(\L{2})}$ cannot have an algebraic or topological triangulated structure with inverse shift functor $P\mapsto P_{\sigma}$. 

We do not know, however, whether our exotic pretriangulated structure on $\proj{P(\L{2})}$ is triangulated, i.e.~we do not know whether it satisfies Verdier's octahedral axiom. The algebra $P(\L{2})$ has finite representation type and we crucially use this to prove \Cref{thm:main}. However, this does not help with the octahedral axiom since, with our current knowledge, in order to reduce it to a finite number of computations, we would need the category of morphisms in $\modst{P(\L{2})}$ to be of finite type, but it is wild.

In \cite{cocalc-project}, we have run some tests supporting the validity of the octahedral axiom in our exotic pretriangulated structure on $\proj{P(\L{2})}$. However, therein, we also check that the sufficient condition in \cite[Theorems 6.1 and 7.7]{muro_2020_first_obstructions_enhancing} does not hold. That condition, which has a low level in the coherence hierarchy, can effectively be checked because $P(\L{2})$ has finite type.

In either case, our new exotic pretriangulated category yields the first example of one of the following phenomena:
\begin{itemize}
	\item Exotic triangulated category over a field of positive characteristic.
	\item Pretriangulated category in characteristic other than $2$ which is not triangulated.
\end{itemize}
We may even have both if the octahedral axiom depends on the characteristic.

\subsection*{Acknowledgements}

Authors were partially supported by the project PID2024-157173NB-I00 funded by MCIN/AEI/10.13039/501100011033 and by FEDER, UE. The first author acknowledges the Spanish Ministry of Science, Innovation and Universities for the financial support through the FPU predoctoral grant FPU23/01226.

Both authors are very grateful to Gustavo Jasso (Cologne) for stimulating conversations on the contents of this paper. The first author is indebted to Gustavo Jasso for his hospitality during his three-month-long visit to the University of Cologne.

\section{Triangulated categories of projectives}

In this section we work over a ground field $k$. Let $\Lambda$ be a finite\-/dimensional basic algebra. Denote by $\mod{\Lambda}$ the category of finite\-/dimensional right $\Lambda$-modules and $\proj{\Lambda}\subset\mod{\Lambda}$ the full subcategory of projectives.

Recall that a \emph{pretriangulated category} is defined in the same way as a triangulated category, but without the octahedral axiom, see \cite[\S2]{krause_2007_derived_categories_resolutions}. If $\proj{\Lambda}$ is pretriangulated then $\Lambda$ is self\-/injective as a consequence of \cite{freyd_1966_stable_homotopy}. Since this is the case we are interested in, we assume the self\-/injectivity of $\Lambda$ straight away.

The \emph{stable module category}, i.e.~the quotient of $\mod{\Lambda}$ by the ideal of morphisms factoring through a projective, is denoted by $\modst{\Lambda}$. This category is triangulated with inverse shift functor $\Omega$, the \emph{syzygy} functor.

Given an algebra automorphism $\sigma\in\Aut{\Lambda}$ and $M$ in $\mod{\Lambda}$, the restriction of scalars of $M$ along $\sigma$ is the \emph{twisted $\Lambda$-module} $M_{\sigma}$ with the same underlying vector space as $M$ and multiplication by elements of $\Lambda$, denoted  $\star$, given by
\[m\star\lambda=m\sigma(\lambda),\quad m\in M,\quad \lambda\in\Lambda.\]
Restriction of scalars acts trivially on morphisms, i.e.~the restriction of scalars of a morphism $f\colon M\to N$ along $\sigma$ is $f\colon M_{\sigma}\to N_{\sigma}$.

We will also consider $\Lambda$-bimodules with twisted right $\Lambda$-module structure. We say that $\Lambda$ is \emph{twisted $n$-periodic} if the $n$\textsuperscript{th} syzygy of $\Lambda$ as a bimodule over itself is $\Omega^n_{\Lambda^e}(\Lambda)\cong\Lambda_\sigma$ in $\modst{\Lambda^e}$ for some $\sigma\in\Aut{\Lambda}$.

Recall that a $k$-linear triangulated category $\T$ is \emph{algebraic} if it is equivalent to a full triangulated subcategory of the stable category of a $k$-linear Frobenius abelian category \cite{keller_2007_differential_graded_categories}.

\begin{theorem}[\cite{hanihara_2020_auslander_correspondence_triangulated,muro_2022_enhanced_finite_triangulated}]\label{thm:Hanihara-Muro}
	Assume that the ground field $k$ is perfect. Then, the following statements are equivalent:
	\begin{enumerate}
		\item $\proj{\Lambda}$ has a pretriangulated category structure.
		\item $\proj{\Lambda}$ has an algebraic triangulated category structure.
		\item $\Lambda$ is twisted $3$-periodic.
	\end{enumerate}
\end{theorem}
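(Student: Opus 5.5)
This theorem is cited from \cite{hanihara_2020_auslander_correspondence_triangulated,muro_2022_enhanced_finite_triangulated}. The plan is to prove the cycle of implications $(2)\Rightarrow(1)\Rightarrow(3)\Rightarrow(2)$.

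The implication $(2)\Rightarrow(1)$ is trivial, since every triangulated structure is in particular pretriangulated.

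For $(1)\Rightarrow(3)$ I will follow Amiot's and Freyd's approach.
\begin{itemize}
\item A pretriangulated structure on $\proj{\Lambda}$ makes $\mod{\Lambda}$, the category of finitely presented functors on $\proj{\Lambda}$, a Frobenius abelian category. This recovers self-injectivity.
\item The inverse shift $\Sigma^{-1}$ is an autoequivalence of $\proj{\Lambda}$. Since $\Lambda$ is basic, it is given by restriction of scalars along some $\sigma\in\Aut{\Lambda}$, up to a natural isomorphism.
\item Each exact triangle $P_\sigma\to Q\to R\to P$ yields an exact sequence $0\to M_\sigma\to P_\sigma\to Q\to R\to M\to 0$, where $M=\coker(Q\to R)$. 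Hence $\Omega^3 M\cong M_{\sigma}$ naturally in $\modst{\Lambda}$; naturality comes from the uniqueness of cones up to non-unique isomorphism, combined with the long exact sequences.
\item It remains to upgrade this to the bimodule statement $\Omega^3_{\Lambda^e}\Lambda\cong\Lambda_\sigma$. I will apply the functorial exact sequence to the bimodule resolution, using that $\Lambda\otimes\Lambda^{op}$ is again self-injective. Perfectness of $k$ enters here: it guarantees that $\Lambda^e$ is well behaved, since $\Lambda/\rad{\Lambda}\otimes(\Lambda/\rad{\Lambda})^{op}$ is semisimple. This lets one detect bimodule isomorphisms from one-sided data, following the arguments of Green--Snashall--Solberg on periodicity.
\end{itemize}

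For $(3)\Rightarrow(2)$ I will construct an enhancement.
\begin{itemize}
\item Take the bimodule exact sequence $0\to\Lambda_\sigma\to P_2\to P_1\to P_0\to\Lambda\to0$, a class in $\ext^3_{\Lambda^e}(\Lambda,\Lambda_\sigma)$, i.e.\ a Hochschild class.
\item Use it to define a differential graded (or $A_\infty$) structure on the graded algebra $\bigoplus_n \Lambda_{\sigma^n}$ placed in degrees $3n$ (or shifted by $2$, as in Amiot). Then $\proj{\Lambda}$ identifies with $H^0$ of the perfect derived category of this dg algebra, which is algebraic.
\item Concretely, I will realize $\proj{\Lambda}$ as the triangulated subcategory of $\modst{\Gamma}$, for a suitable self-injective $\Gamma$, or of a Frobenius category of complexes, whose objects have the given Hom-spaces. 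Exact triangles then come from truncations of the periodic resolution.
\end{itemize}

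The main obstacle is the bimodule upgrade in $(1)\Rightarrow(3)$: showing that a natural one-sided isomorphism $\Omega^3\cong(-)_\sigma$ on $\modst{\Lambda}$ lifts to a bimodule periodicity. The natural isomorphism need not be compatible with the $\Lambda$-action on the left. I would first try to show that the obstruction lives in Hochschild cohomology, which vanishes or can be killed by modifying the choice of $\sigma$ via inner automorphisms, using finite dimensionality and perfectness of $k$. Failing that, I would rely directly on the cited theorems of Hanihara and Muro.
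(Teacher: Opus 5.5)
\textbf{There is a genuine gap in $(1)\Rightarrow(3)$.} Your step $(2)\Rightarrow(1)$ is fine. Your outline of $(3)\Rightarrow(2)$ is in line with the paper, which does not prove the theorem itself: it invokes \cite[Theorem 8.1]{amiot_2007_structure_triangulated_categories} for $(3)\Rightarrow(2)$, valid over any field, and cites Hanihara and Muro for the rest.

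The problem is your intermediate target. You set out to prove $\Omega^3_{\Lambda^e}\Lambda\cong\Lambda_\sigma$, where $\sigma$ realizes the inverse shift of the \emph{given} pretriangulated structure. This is false in general, and this very paper is a counterexample:
\begin{itemize}
\item For $\Lambda=P(\L{2})$ one has $\Omega^3_{\Lambda^e}\Lambda\cong\Lambda_\nu$.
\item \Cref{thm:non-algebraic} and \Cref{prop:exotic_automorphism} give a pretriangulated structure on $\proj{\Lambda}$ with inverse shift $(-)_\sigma$, where $\sigma=\nu\phi$ and $[\phi]\neq 1$ in $\Out{\Lambda}$.
\item The last paragraph of the proof of \Cref{thm:non-topological} shows that a stable bimodule isomorphism $\Omega^3_{\Lambda^e}\Lambda\cong\Lambda_\sigma$ would force $[\phi]=1$.
\end{itemize}
In that example there is even an isomorphism of \emph{exact} functors $(\Omega^3,-\id{\Omega^4})\cong((-)_\sigma,\varphi_\sigma)$. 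So the issue is not naturality or compatibility with the left action. Data living on $\modst{\Lambda}$ only determine $[\sigma]$ modulo the kernel of $\Out{\Lambda}\to\Picst{\Lambda}$.

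Neither of your proposed repairs can work. Precomposing $\sigma$ with an inner automorphism does not change $\Lambda_\sigma$ up to bimodule isomorphism at all. The ``obstruction'' also does not vanish. Whether Heller's isomorphism comes from a Hochschild class (an edge unit in $\HH{3}{\Lambda,\Lambda_\sigma}{}$), which is what would yield $\Omega^3_{\Lambda^e}\Lambda\cong\Lambda_\sigma$, is exactly what separates algebraic structures from the exotic one. As the paper stresses right after the statement, the algebraic structure whose existence is asserted may have a different shift functor from the given pretriangulated one.

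\textbf{How to fix it.} Decouple the twist from the shift, since (3) only asks for \emph{some} automorphism $\tau$. You only need object-wise information on simple modules:
\begin{itemize}
\item The exact triangles give $\Omega^3S\cong S_\sigma$ for each simple $S$, so $\Omega^3$ sends simples to simples.
\item Since $k$ is perfect, $(\Lambda/\rad{\Lambda})^e$ is semisimple, so $\rad{\Lambda^e}=\rad{\Lambda}\otimes\Lambda^{op}+\Lambda\otimes\rad{\Lambda}^{op}$. Hence tensoring a minimal bimodule resolution of $\Lambda$ with a simple $S$ yields a minimal projective resolution of $S$.
\item Therefore $S\otimes_\Lambda\Omega^3_{\Lambda^e}\Lambda\cong\Omega^3S$ is simple for every simple $S$.
\item A Green--Snashall--Solberg type criterion then gives $\Omega^3_{\Lambda^e}\Lambda\cong\Lambda_\tau$ for some $\tau\in\Aut{\Lambda}$.
\end{itemize}
This $\tau$ induces the same permutation of simples as $\sigma$, but in general $[\tau]\neq[\sigma]$ in $\Out{\Lambda}$.

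In particular, you do not need the naturality of $\Omega^3M\cong M_\sigma$ at all. Note also that this naturality does not follow from uniqueness of cones, since the isomorphisms between cones are non-canonical. It is the content of Heller's theorem (\Cref{thm:Heller}).
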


The implications $(3)\Rightarrow(2)\Rightarrow(1)$ hold without any assumptions on $k$. The second one is obvious and the first one follows from \cite[Theorem 8.1]{amiot_2007_structure_triangulated_categories}.

We emphasize the fact that being a (pre)triangulated category is a structure rather than a property. A given category may have several (pre)triangulated structures, even with the same shift functor, compare \cite{balmer_2002_triangulated_categories_several}. Moreover, \Cref{thm:Hanihara-Muro} does not assert that any pretriangulated structure on $\proj{\Lambda}$ is an algebraic triangulated structure. It just says that if it has a pretriangulated category structure then it also has a possibly different algebraic triangulated category structure. As we will see below, they may even have a different shift functor.

Although we usually designate a (pre)triangulated category by its underlying category $\T$, it really is a triple $(\T,\Omega,\triangle)$ consisting of an additive category $\T$, a self\-/equivalence $\Omega\colon\T\stackrel{\sim}{\to}\T$ called \emph{inverse shift functor}, and a class $\triangle$ of \emph{exact triangles}, which look like
\[\Omega X\stackrel{f}{\longrightarrow} Y \stackrel{i}{\longrightarrow}Z\stackrel{q}{\longrightarrow}X.\]
Similarly, an \emph{exact functor} $(\Phi,\varphi)\colon(\T,\Omega,\triangle)\to(\T',\Omega',\triangle')$ consists of a functor $\Phi\colon\T\to\T'$ together with a natural isomorphism $\varphi\colon\Omega'\Phi\cong\Phi\Omega$ such that $(\Phi,\varphi)$ takes exact triangles in $\triangle$ to exact triangles in $\triangle'$. A \emph{morphism of exact functors} $\alpha\colon(\Phi,\varphi)\to(\Psi,\psi)$ is a natural transformation $\alpha\colon\Phi\to\Psi$ such that the following square of natural transformations commutes
\begin{equation}\label{eq:triangulated_natural_transformation}
\begin{tikzcd}
	\Omega'\Phi\ar[d,"\cong","\varphi"']\ar[r,"\Omega'\alpha"]&\Omega'\Psi \ar[d,"\cong"',"\psi"]\\
	\Phi\Omega\ar[r,"\alpha\Omega"]&\Psi\Omega
\end{tikzcd}
\end{equation}

For any pretriangulated category $\T$, $(\Omega,-\id{\Omega^2})$ is an exact self\-/equivalence, and so are its powers $(\Omega^n,(-1)^n\id{\Omega^{n+1}})$.

The \emph{Picard group} $\Pic{\Lambda}$ is the group of natural isomorphism classes of self\-/equivalences of $\proj{\Lambda}$, or equivalently of $\mod{\Lambda}$. It is isomorphic to the \emph{outer automorphism group} $\Out{\Lambda}$ of the algebra $\Lambda$, see \cite[Proposition 3.8]{bolla_1984_isomorphisms_endomorphism_rings}. The isomorphism $\Out{\Lambda}\cong\Pic{\Lambda}$ takes the class of an automorphism $\sigma$ to the class of the restriction of scalars along $\sigma$, $M\mapsto M_{\sigma}$. Therefore, these are the possible shift functors of $\proj{\Lambda}$. Since $M\mapsto M_{\sigma}$ is also a self\-/equivalence of $\mod{\Lambda}$, it induces an exact self\-/equivalence of $\modst{\Lambda}$, with structure natural isomorphism $\varphi_\sigma\colon\Omega(M_\sigma)\cong(\Omega M)_\sigma$. A representative of this natural isomorphism is defined as follows. First of all, we must choose a projective cover $p_M\colon P_M\twoheadrightarrow M$ for each object $M$ of $\mod{\Lambda}$. Its kernel $\Omega M$ is the syzygy of $M$, so we have short exact sequences
\begin{equation}\label{eq:syzygy_ses}
\begin{tikzcd}
\Omega M \arrow[r, hook, "j_M"] & P_M \arrow[r, two heads, "p_M"] & M
\end{tikzcd}
\end{equation}
where $j_M$ is the inclusion of the kernel of $p_M$. We now complete the identity in $M_\sigma$ to a commutative diagram:
\[
	\begin{tikzcd}
	\Omega (M_\sigma) \arrow[r, "j_{M_\sigma}", hook] \arrow[d, "\varphi_\sigma(M)"'] & P_{M_\sigma} \arrow[r, "p_{M_\sigma}", two heads] \arrow[d] & M_\sigma \arrow[d, "\id{M_{\sigma}}"] \\
	(\Omega M)_\sigma \arrow[r, "j_M"', hook] & (P_{M})_\sigma \arrow[r, "p_{M}"', two heads] & M_\sigma
	\end{tikzcd}
\]
The class of the map $\varphi_\sigma(M)$ is well defined in $\modst{\Lambda}$. It is the natural isomorphism $\varphi_\sigma$ at $M$.

\begin{theorem}[{\cite[Theorem 16.4]{heller_1968_stable_homotopy_categories}}]\label{thm:Heller}
	Let $k$ be any field and $\sigma\in\Aut{\Lambda}$. The set of pretriangulated structures on $\proj{\Lambda}$ with inverse shift functor $M\mapsto M_{\sigma}$ is in bijection with the set of isomorphisms of exact functors $(\Omega^{3},-\id{\Omega^{4}})\cong((-)_\sigma,\varphi_\sigma)$ between these exact self\-/equivalences of $\modst{\Lambda}$.
\end{theorem}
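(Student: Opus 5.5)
The plan is to set up both directions of the bijection explicitly, following Heller's original approach. We use the standard fact that for a self-injective algebra $\Lambda$, every morphism $f\colon P\to Q$ in $\proj{\Lambda}$ fits into an exact sequence in $\mod{\Lambda}$:
\[0\to K\to P\xrightarrow{f} Q\to C\to 0.\]
Moreover, $\Omega^2 C\cong K$ in $\modst{\Lambda}$, after removing projective summands.

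\textbf{From a structure to an isomorphism.} Suppose we are given a pretriangulated structure on $\proj{\Lambda}$ with inverse shift $(-)_\sigma$. Take any exact triangle
\[P_\sigma^{3}\xrightarrow{f}P^{1}\xrightarrow{i}P^{2}\xrightarrow{q}P^3.\]
Applying $\hom(-,?)$ and the long exact sequence of a triangle, we find that the sequence
\[\cdots\to P^1_\sigma\to P^2_\sigma \to P^3_\sigma\xrightarrow{f}P^1\xrightarrow{i}P^2\xrightarrow{q}P^3\to\cdots\]
is exact in $\mod{\Lambda}$. Exactness holds because representable functors on $\proj{\Lambda}$ detect exactness: $\hom(\Lambda,-)$ is the forgetful functor. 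So the unbounded sequence is an acyclic complex of projectives. Since it is periodic up to twist by $\sigma$, cutting it at $\coker(q)=M$ gives $\Omega^3 M\cong M_\sigma$ in $\modst{\Lambda}$.

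The key steps are then as follows.
\begin{enumerate}
\item Show that every $M\in\modst{\Lambda}$ arises this way. Take a projective presentation $P^2\xrightarrow{q}P^3\to M$ and complete $q$ to a triangle.
\item Show that the resulting map $\Omega^3M\to M_\sigma$ is well defined and natural. This uses TR3 to compare triangles, and the fact that any comparison map is unique modulo maps factoring through projectives.
\item Check compatibility with the structure isomorphisms, that is, the square \eqref{eq:triangulated_natural_transformation} with $(\Omega^3,-\id{})$ and $((-)_\sigma,\varphi_\sigma)$. This follows from rotating the triangle. Rotation introduces a sign, and that sign accounts for the $-\id{\Omega^4}$.
\end{enumerate}

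\textbf{From an isomorphism to a structure.} Conversely, given an isomorphism of exact functors $\alpha\colon\Omega^3\cong(-)_\sigma$, we declare a candidate triangle $P^3_\sigma\to P^1\to P^2\to P^3$ to be exact when two conditions hold:
\begin{itemize}
\item the associated four-term sequence is exact in $\mod{\Lambda}$, and
\item the identification of $P^3_\sigma\to P^1$ with the composite of $\Omega^3$ of the cokernel maps agrees, via $\alpha$, with the one obtained by splicing syzygy sequences.
\end{itemize}
Concretely, splice the exact sequences $0\to\Omega^{n+1}M\to P_{\Omega^nM}\to\Omega^nM\to0$, and use a chosen representative of $\alpha_M$ lifted to projective covers to build the map $P^3_\sigma\to P^1$. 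The triangles are then those isomorphic to these.

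We then verify the axioms:
\begin{itemize}
\item TR1 (existence) follows by taking cokernels as above.
\item Rotation uses exactness of $\alpha$.
\item TR3 uses naturality of $\alpha$ together with lifting of stable maps to maps of projective resolutions.
\end{itemize}
Finally, we check that the two constructions are mutually inverse. This amounts to noting that a triangle is determined up to isomorphism by its exact sequence plus the stable class of the connecting datum.

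\textbf{Main obstacle.} The hardest step is the well-definedness and rotation compatibility: tracking the signs and the choices of projective covers so that the homotopy classes are independent of choices and the sign $-\id{\Omega^4}$ matches exactly. I would handle this by working in the homotopy category of acyclic complexes of projectives, which is equivalent to $\modst{\Lambda}$. There, $\Omega$ is the degree shift with the standard sign convention, so the rotation sign appears automatically.
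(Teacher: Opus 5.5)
Your outline is correct and is essentially Heller's argument specialized to $\proj{\Lambda}\subset\mod{\Lambda}$. The paper does not reprove the theorem but cites it, and the characterization of exact triangles you use in the converse direction is exactly the one recorded in \Cref{rem:Heller}; two details should be made explicit. First, you must require exactness of the five-term sequence $R_\sigma\to P_\sigma\to Q\to R\to P$, not just four terms; second, in TR3 and when showing that a sequence with the correct identification is isomorphic to an exact triangle, the lift on $Q$ only matches up to a discrepancy on $\ker i$ factoring through a projective, which (projectives being injective) extends along $\ker i\hookrightarrow Q$ and can then be subtracted from the lift.
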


\Cref{thm:Hanihara-Muro} characterizes when this set is non-empty. Many of the \emph{different} pretriangulated structures in \Cref{thm:Heller} may be \emph{equivalent}, though.

\Cref{thm:Heller} is originally stated in terms of shift functors, not inverse shift functors. Both approaches are obviously equivalent since they are inverse self-equivalences.

\begin{remark}\label{rem:Heller}
	Given an isomorphism of exact functors $\delta\colon(\Omega^{3},-\id{\Omega^{4}})\cong((-)_\sigma,\varphi_\sigma)$, the exact triangles of the associated pretriangulated structure on $\proj{\Lambda}$ can be described as follows. An exact triangle must be a sequence in $\proj{\Lambda}$ of the form
	\[P_{\sigma} \stackrel{f}\longrightarrow Q\stackrel{i}\longrightarrow R \stackrel{q}\longrightarrow P\]
	such that
	\[R_{\sigma}\stackrel{q}{\longrightarrow}P_{\sigma} \stackrel{f}\longrightarrow Q\stackrel{i}\longrightarrow R \stackrel{q}\longrightarrow P\]
	is exact. Let $M=\coker q$ in $\mod{\Lambda}$. The previous sequence identifies
	\[\Omega^{3}M\cong\ker \left(i\colon Q\to R\right) \cong\coker\left(q\colon R_{\sigma}\to P_{\sigma}\right)=M_{\sigma}\]
	in $\modst{\Lambda}$. The sequence is an exact triangle if and only if the previous isomorphism coincides with $\delta(M)$.

	As a consequence of the previous description, exact triangles in the pretriangulated category $(\proj{\Lambda},(-)_\sigma,\triangle_{\delta})$ decompose as a direct sum of trivial triangles and \emph{minimal exact triangles}. The latter are those obtained from a non-projective indecomposable $M$ in $\mod{\Lambda}$ by taking a representative $\Omega^{3}M\cong M_\sigma$ of $\delta(M)$ and the first steps of a minimal projective resolution
	\[
		\begin{tikzcd}
			P_{\sigma}\ar[rr,"f"]\ar[rd,"p"']&&Q\ar[r,"i"]&R\ar[r,"q"]&P\ar[rd,two heads,"p"]&\\
			&M_\sigma\cong \Omega^{3}M\ar[ru, hook, "j"']&&&&M
		\end{tikzcd}
	\]
\end{remark}

We define the \emph{stable Picard group} $\Picst{\Lambda}$ as the group of isomorphism classes of exact self-equivalences of $\modst{\Lambda}$. Different groups in the literature go by this name, e.g.~\citeauthor{asashiba_2003_lift_individual_stable} considers the group of self-equivalences of $\modst{\Lambda}$ as an ordinary category \cite{asashiba_2003_lift_individual_stable}, see \cite{dugas_trok_2018_stable_picard_group} for yet another candidate. This is because different but equivalent definitions for abelian categories yield possibly different triangulated versions. \citeauthor{asashiba_2003_lift_individual_stable}'s stable Picard group sits below our $\Picst{\Lambda}$. There is an obvious group morphism $\Pic{\Lambda}\to\Picst{\Lambda}$ sending the class of a self\-/equivalence of $\mod{\Lambda}$ to the class of the induced self\-/equivalence of $\modst{\Lambda}$.

The algebra $\Lambda$ is \emph{connected} or \emph{indecomposable} if it is not isomorphic to the product of two non-trivial algebras.

\begin{theorem}\label{thm:non-algebraic}
	Let $\Lambda$ be a basic connected non\-/separable self\-/injective twisted $3$-periodic finite\-/dimensional algebra over a perfect field $k$. The category $\proj{\Lambda}$ has an essentially unique algebraic triangulated structure. Its inverse shift functor is $M\mapsto M_{\nu}$, the restriction of scalars along an automorphism $\nu\in\Aut{\Lambda}$ with $\Omega^{3}_{\Lambda^e}(\Lambda)\cong\Lambda_{\nu}$ in $\modst{\Lambda^e}$. Moreover, any non-trivial class $[\phi]$ in the kernel of the group morphism $\Out{\Lambda}\cong\Pic{\Lambda}\to\Picst{\Lambda}$ gives rise to a non-algebraic pretriangulated category structure on $\proj{\Lambda}$ with inverse shift functor $M\mapsto M_{\sigma}$, the restriction of scalars along $\sigma=\nu\phi$.
\end{theorem}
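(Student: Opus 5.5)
We split the statement into three steps: uniqueness of the algebraic structure, identification of its shift functor, and non-algebraicity of the twisted structure. For the first two we use the dg/$A_\infty$ enhancement theory behind \Cref{thm:Hanihara-Muro}. Suppose $\proj{\Lambda}$ carries an algebraic triangulated structure with inverse shift $(-)_\tau$. An algebraic triangulated category with finitely many indecomposables and basic additive generator $\Lambda$ is equivalent to $\operatorname{per}$ of a dg algebra $A$ with $H^*A$ the graded algebra $\bigoplus_n\hom(\Lambda,\Omega^{-n}\Lambda)$. Since the category is $\proj{\Lambda}$ with invertible $\Omega$, we get $H^*A=\Lambda(\tau)$, a twisted Laurent-type graded algebra with $\Lambda$ in degrees divisible by $3$ (after reindexing so that the shift is $3$-periodic in the relevant sense).

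By \cite{muro_2022_enhanced_finite_triangulated} (or Hanihara's Auslander correspondence), the minimal $A_\infty$-structures on such an algebra that yield $\proj{\Lambda}$ triangulated are governed by a Hochschild class $m_3$. This class must be a unit in Hochschild–Tate cohomology, and it exhibits $\Omega^3_{\Lambda^e}(\Lambda)\cong\Lambda_\tau$ in $\modst{\Lambda^e}$. Because $\Lambda$ is connected and non-separable, $\Omega_{\Lambda^e}(\Lambda)$ is indecomposable non-projective. A stable isomorphism $\Lambda_\tau\cong\Lambda_\nu$ of bimodules then forces $\tau\nu^{-1}$ to be inner, since $\underline{\operatorname{End}}_{\Lambda^e}(\Lambda)$ is a local quotient of the centre and stable isomorphisms between invertible bimodules lift. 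Hence $[\tau]=[\nu]$ in $\Out{\Lambda}$. Essential uniqueness comes from the intrinsic formality statement in \cite{muro_2022_enhanced_finite_triangulated}: any two such $m_3$ differ by a unit of $\HH{0}{\Lambda}{}$ modulo higher terms, and the resulting $A_\infty$-algebras are quasi-isomorphic up to rescaling. We will quote this rather than reprove it.

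For the last claim, take $[\phi]\neq 1$ in $\ker(\Pic{\Lambda}\to\Picst{\Lambda})$. Then $((-)_\phi,\varphi_\phi)\cong(\id{},\id{})$ as exact functors. Compose this with the isomorphism $\delta_\nu\colon(\Omega^3,-\id{\Omega^4})\cong((-)_\nu,\varphi_\nu)$ coming from the algebraic structure via \Cref{thm:Heller}. We need $((-)_\phi,\varphi_\phi)\circ((-)_\nu,\varphi_\nu)=((-)_{\nu\phi},\varphi_{\nu\phi})$ up to the canonical isomorphism of exact functors; with the convention $(M_\nu)_\phi=M_{\nu\phi}$ this is a short check. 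This produces an isomorphism of exact functors $(\Omega^3,-\id{\Omega^4})\cong((-)_\sigma,\varphi_\sigma)$, which by \Cref{thm:Heller} gives a pretriangulated structure with inverse shift $(-)_\sigma$. It is not algebraic: an algebraic structure would be equivalent to the unique one, so its shift would be conjugate to $(-)_\nu$ in $\Pic{\Lambda}$. Since $[\sigma]=[\nu][\phi]\neq[\nu]$, and conjugating by a self-equivalence $(-)_\psi$ changes $[\nu\phi]$ to $[\psi^{-1}\nu\phi\psi]$, we also need $[\nu]$ to be central in $\Out{\Lambda}$. This holds because $\Omega^3_{\Lambda^e}\Lambda$ is canonical and therefore invariant under all automorphisms: $\psi$ transports $\Lambda_\nu$ to $\Lambda_{\psi^{-1}\nu\psi}$ stably, so $[\psi^{-1}\nu\psi]=[\nu]$ by the argument above. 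Hence $[\psi^{-1}\nu\phi\psi]=[\nu][\psi^{-1}\phi\psi]\neq[\nu]$.

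The main obstacle is the uniqueness and shift-identification step. It requires showing that every algebraic structure's shift satisfies $\Omega^3_{\Lambda^e}\Lambda\cong\Lambda_\tau$ stably, and passing from stable bimodule isomorphisms to equality in $\Out{\Lambda}$, which is where non-separability and connectedness enter. I would first try to extract it directly from the statements in \cite{muro_2022_enhanced_finite_triangulated}.
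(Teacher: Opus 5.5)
Your proposal is correct and follows the paper's proof. Both cite \cite[Proposition 9.8]{muro_2022_enhanced_finite_triangulated} for uniqueness and for the identification of the shift. Both obtain the new structure by horizontally composing $\delta$ with $(\id{\modst{\Lambda}},\id{\Omega})\cong((-)_\phi,\varphi_\phi)$ and applying \Cref{thm:Heller}. Both deduce non-algebraicity from $[\sigma]=[\nu][\phi]\neq[\nu]$ in $\Out{\Lambda}$.

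Your centrality argument for $[\nu]$, which handles conjugation by an equivalence $(-)_\psi$, is valid. It can be bypassed, though: your own first step (any algebraic structure with inverse shift $(-)_\tau$ forces $[\tau]=[\nu]$) applies directly to the $\sigma$-structure. One small slip: $H^*A$ is the twisted Laurent algebra with a copy of $\Lambda$ in every degree, not only in degrees divisible by $3$.
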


\begin{proof}
	The claim about the existence of an essentially unique algebraic triangulated category structure on $\proj{\Lambda}$, and about its inverse shift functor, follows from \cite[Proposition 9.8]{muro_2022_enhanced_finite_triangulated}.

	Let
	\[\delta\colon (\Omega^{3},-\id{\Omega^{4}})\cong((-)_{\nu},\varphi_{\nu})\]
	be the isomorphism of exact functors corresponding to the algebraic triangulated category structure on $\proj{\Lambda}$ through \Cref{thm:Heller}.

	Since $[\phi]$ is in the kernel of $\Out{\Lambda}\cong\Pic{\Lambda}\to\Picst{\Lambda}$, we have another isomorphism of exact functors
	\[\zeta\colon(\id{\modst{\Lambda}},\id{\Omega})\cong((-)_{\phi},\varphi_{\phi}).\]

	We can horizontally compose both isomorphisms of exact functors, obtaining a new one
	\[\zeta\delta\colon(\Omega^{3},-\id{\Omega^{4}})\cong((-)_{\nu\phi},\varphi_{\nu\phi})=((-)_{\sigma},\varphi_{\sigma}).\]
	This isomorphism of exact functors induces a pretriangulated category structure on $\proj{\Lambda}$ with inverse shift functor $M\mapsto M_{\sigma}$ through \Cref{thm:Heller}.

	The natural equivalences $M\mapsto M_{\nu}$ and $M\mapsto M_{\sigma}$ of $\proj{\Lambda}$ are not naturally isomorphic because $[\nu]\neq[\sigma]=[\nu][\phi]\in\Out{\Lambda}\cong\Pic{\Lambda}$ since $[\phi]$ is non-trivial. Therefore, the latter pretriangulated structure on $\proj{\Lambda}$ cannot be equivalent to its essentially unique algebraic triangulated structure.
\end{proof}

\begin{remark}\label{rem:exact-triangles}
	Exact triangles in a non-algebraic pretriangulated structure on $\proj{\Lambda}$ obtained from \Cref{thm:non-algebraic} can be obtained from those of an algebraic triangulated structure in the following way. Fix the latter.

	Assume we have a minimal exact triangle in the fixed algebraic triangulated category structure on $\proj{\Lambda}$,
	\[P_\nu\stackrel{f}{\longrightarrow}Q\stackrel{i}{\longrightarrow}R\stackrel{q}{\longrightarrow}P.\]
	Let $M=\coker q$, which is a non-projective indecomposable in $\mod{\Lambda}$, and factor $f$ as follows:
	\[
	\begin{tikzcd}
		P_{\nu}\ar[rr,"f"]\ar[rd,two heads,"p"']&&Q\ar[r,"i"]&R\ar[r,"q"]&P\ar[rd,two heads, "p"]&\\
		&M_\nu\ar[ru,"j"', hook]&&&&M
	\end{tikzcd}
	\]

	Choose a representative $M_{\nu}\cong M_{\nu\phi}=M_{\sigma}$ in $\mod{\Lambda}$ of the isomorphism $\zeta(M_{\nu})$ in $\modst{\Lambda}$ in the proof of \Cref{thm:non-algebraic}. We can indeed choose the representative to be an isomorphism. Since $\zeta$ is natural, it suffices to prove it for $M$ indecomposable. If $M$ is non-projective, any representative is actually an isomorphism. If $M=e\Lambda$, with $e$ belonging to a complete orthogonal set of primitive idempotents, then we can take $e\Lambda_{\sigma}\cong e\Lambda_{\nu}$ to be the only isomorphism sending $e\mapsto e$.

	The following sequence is a minimal exact triangle in the non-algebraic pretriangulated category structure on $\proj{\Lambda}$:
	\[
	\begin{tikzcd}
		P_{\sigma}\ar[rr,"f"]\ar[rd,two heads,"p"']&&Q\ar[r,"i"]&R\ar[r,"q"]&P\\
		&M_{\sigma}\cong M_\nu\ar[ru,"j"', hook]&&&
	\end{tikzcd}
	\]
\end{remark}

Obtaining examples of algebras satisfying the hypotheses of \Cref{thm:non-algebraic} and such that the group morphism $\Out{\Lambda}\cong\Pic{\Lambda}\to\Picst{\Lambda}$ is not injective is not easy. Our example is computed in \Cref{sec:example}. Some readers may want to look for other examples. In order to help them, we present a criterion to discard some non-examples.

The algebra $\Lambda$ is \emph{Schurian} if $\dim(e_i\Lambda e_j)\leq 1$ for any pair $e_i,e_j$ belonging to a complete orthogonal set of primitive idempotents.

\begin{lemma}[{\cite[Lemma 4.4]{dugas_2010_periodic_resolutions_selfinjective}}]\label{lem:Dugas}
	If $\Lambda$ is Schurian then the group morphism $\Pic{\Lambda}\to\Picst{\Lambda}$ is injective.
\end{lemma}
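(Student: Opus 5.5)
We need to show: if $\sigma\in\Aut{\Lambda}$ is such that $M\mapsto M_\sigma$ is isomorphic, as an exact functor, to the identity of $\modst{\Lambda}$, then $\sigma$ is inner. The plan is to use the isomorphism on the indecomposable non-projective modules together with the Schurian condition to force $\sigma$ to act on $\Lambda$ by conjugation.

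\emph{Reduction.} Fix a complete orthogonal set of primitive idempotents $e_1,\dots,e_n$. Since $\Lambda$ is basic, $\sigma$ permutes the isoclasses of the indecomposable projectives $e_i\Lambda$. After composing with an inner automorphism, we may assume $\sigma(e_i)=e_{\pi(i)}$ for a permutation $\pi$.

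\emph{The permutation is trivial.} Consider the simple modules $S_i=\operatorname{top}(e_i\Lambda)$, which are non-projective because a non-semisimple connected self-injective algebra has no projective simples; the semisimple case is trivial. We have $(S_i)_\sigma\cong S_{\pi^{-1}(i)}$. The isomorphism $\zeta\colon\id{}\cong(-)_\sigma$ in $\modst{\Lambda}$ gives $S_i\cong (S_i)_\sigma$ stably. Stably isomorphic modules without projective summands are isomorphic, so $\pi=\id{}$. Hence $\sigma(e_i)=e_i$ for all $i$. By the Schurian hypothesis, for each arrow, or more generally for each nonzero $x\in e_i\Lambda e_j$, we get $\sigma(x)=c_x x$ for a scalar $c_x\in k^\times$, since $\dim e_i\Lambda e_j\le1$.

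\emph{Scalars come from a vertex function.} It remains to show that there are $\lambda_i\in k^\times$ with $c_x=\lambda_i\lambda_j^{-1}$ for $x\in e_i\Lambda e_j$. Then $\sigma$ is conjugation by $\sum\lambda_ie_i$, hence inner. This is the main obstacle. I would use naturality of $\zeta$ on suitable non-projective modules. For each nonzero $x\in e_i\Lambda e_j$, look at the cyclic module $x\Lambda\cong e_j\Lambda/\operatorname{ann}$, or more simply at the map $e_j\Lambda\to e_i\Lambda$ given by left multiplication by $x$. Pass to quotients by socles, or to the simples and uniserial modules, so that the map becomes nonzero in $\modst{\Lambda}$.

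Each $\zeta(M)$ for such a local module $M$ is an isomorphism $M\to M_\sigma$. Hence, modulo maps factoring through projectives, it is determined by a scalar on the top, say $\lambda_M$, chosen with $\zeta(S_i)$ normalized. Naturality with respect to the stable map induced by $x$ gives the relation $\sigma(x)\lambda_j=\lambda_i x$ up to projective-factoring terms. The delicate point is ensuring these maps are nonzero stably, i.e.\ do not factor through projectives. Here I expect the Schurian property again to help: a map factoring through a projective lands in the socle and involves longest paths, whereas maps induced by arrows between non-projective local modules such as $e_i\Lambda/\soc{e_i\Lambda}$ are not of that form. One needs a separate check only for $x$ spanning a socle component.

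Once the cocycle condition holds on arrows, it extends multiplicatively to all paths, which finishes the argument. Exactness of $\zeta$ is not even needed beyond naturality; I expect the argument of \cite{dugas_2010_periodic_resolutions_selfinjective} to be essentially this.
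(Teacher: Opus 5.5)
Your reduction to $\sigma(e_i)=e_i$ and $\sigma(x)=c_xx$ on each $e_i\Lambda e_j$ is fine. It uses connectedness, which the statement needs anyway: for $\Lambda=k\times k$ the swap is outer while $\modst{\Lambda}=0$.

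\textbf{The gap.} The failing step is the one you call the main obstacle, and the route you sketch for it cannot work. If $x\in e_i\Lambda e_j$ lies in the radical, then $x\cdot\soc{e_j\Lambda}\subseteq\rad{\Lambda}\soc{\Lambda}=0$, because the left and right socles of a self-injective algebra coincide. Hence the map $e_j\Lambda/\soc{e_j\Lambda}\to e_i\Lambda/\soc{e_i\Lambda}$ induced by $x$ factors as $e_j\Lambda/\soc{e_j\Lambda}\xrightarrow{x\cdot}e_i\Lambda\twoheadrightarrow e_i\Lambda/\soc{e_i\Lambda}$, i.e.\ through a projective. Equivalently, these modules are the $\Omega^{-1}S$ for $S$ simple, and $\homst(\Omega^{-1}S,\Omega^{-1}S')\cong\homst(S,S')=0$ for $S\not\cong S'$. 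Between simples, arrows act by zero already in $\mod{\Lambda}$. So naturality on the modules you propose gives no relation among the $c_x$. Your heuristic that maps through projectives land in the socle is false.

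\textbf{Exactness cannot be dropped.} Your closing claim that exactness of $\zeta$ is not needed is wrong. Take the self-injective Nakayama algebra on the cyclic quiver $x_i\colon i\to i+1$ (indices mod $n$, $n\geq2$) with $\rad{\Lambda}^2=0$; it is Schurian. Let $\sigma(e_i)=e_i$ and $\sigma(x_i)=c_ix_i$ with $\prod_ic_i\neq1$. This $\sigma$ is outer: a unit inducing it would commute with all $e_i$, hence lie in $\bigoplus_ike_i$, forcing $\prod_ic_i=1$. But $(-)_\sigma$ is the identity on semisimple modules, which form a full subcategory equivalent to $\modst{\Lambda}$. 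So $(-)_\sigma\cong\id{}$ as ordinary functors, and only $\varphi_\sigma(S_i)=c_i$ detects $\sigma$, as in the dual numbers example.

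\textbf{How to repair it.} The missing idea is to use the square \eqref{eq:triangulated_natural_transformation}.
\begin{enumerate}
\item Write $\zeta(S_i)=a_i\in k^\times$; here $(S_i)_\sigma=S_i$ and $\homst(S_i,S_i)=k$.
\item Use the same projective cover $e_i\Lambda\twoheadrightarrow S_i$ for $S_i$ and $(S_i)_\sigma$. Then $\varphi_\sigma(S_i)$ is the restriction of $\sigma$ to $\rad{e_i\Lambda}=\Omega S_i$, so exactness forces $\zeta(\Omega S_i)=a_i\,\sigma|_{\rad{e_i\Lambda}}$.
\item For an arrow $x\in e_i\Lambda e_j$, let $p_x\colon\rad{e_i\Lambda}\twoheadrightarrow S_j$ be the projection onto the $x$-summand of the top. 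It is stably nonzero, since it represents a nonzero class in $\ext^1_{\Lambda}(S_i,S_j)\cong\homst(\Omega S_i,S_j)$. Also $p_x\sigma=c_xp_x$, because $\sigma$ scales each $e_i\Lambda e_l$.
\item Naturality of $\zeta$ at $p_x$ gives $a_j=c_xa_i$. So $\sigma$ agrees with conjugation by $\sum_ia_i^{-1}e_i$ on idempotents and arrows, hence is inner.
\end{enumerate}

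\textbf{Comparison with the paper.} The paper gives no direct argument. It cites Dugas's lemma, formulated for self-equivalences of $\modst{\Lambda}$ as an ordinary category, and uses that $\Picst{\Lambda}$ maps to that group. The Nakayama example shows that an ordinary-functor version cannot hold for every Schurian self-injective algebra, so that citation must rest on further hypotheses in Dugas's setting. Your direct route, once repaired as above, proves the exact version for all connected non-semisimple Schurian self-injective algebras. It does so using precisely the extra structure that $\Picst{\Lambda}$ records.
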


Actually, \citeauthor{dugas_2010_periodic_resolutions_selfinjective} proves this \namecref{lem:Dugas} for \citeauthor{asashiba_2003_lift_individual_stable}'s stable Picard group, hence it follows for ours, which sits above.

\begin{example}
	The following example shows that the condition of being Schurian in \Cref{lem:Dugas} is not necessary.

	Let $\Lambda=k[\varepsilon]/(\varepsilon^2)$ be the algebra of dual numbers over a perfect field $k$, which is obviously not Schurian. The algebra automorphism group is $k^\times\cong\Aut{\Lambda}$, sending $\alpha\in k^\times$ to the automorphism $\sigma_\alpha\in\Aut{\Lambda}$ defined by $\sigma_\alpha(\varepsilon)=\alpha\varepsilon$, and $\Pic{\Lambda}\cong\Out{\Lambda}=\Aut{\Lambda}$ since $\Lambda$ is commutative. This algebra satisfies all hypotheses of \Cref{thm:non-algebraic} with $\Omega^3_{\Lambda^e}(\Lambda)\cong\Lambda_{\sigma_{-1}}$ in \(\modst{\Lambda^e}\).

	The stable module category $\modst{\Lambda}$ is equivalent to $\mod{k}$. The equivalence is the composition $\mod{k}\to\mod{\Lambda}\twoheadrightarrow\modst{\Lambda}$ of the full inclusion defined by the algebra epimorphism $p\colon \Lambda\twoheadrightarrow k$ followed by the canonical projection onto the quotient category.
	\citeauthor{asashiba_2003_lift_individual_stable}'s stable Picard group is therefore trivial. However, our $\Picst{\Lambda}$ is non-trivial in general. Moreover, the group morphism $\Pic{\Lambda}\to\Picst{\Lambda}$ is an isomorphism.

	Indeed, the induced triangulated structure on $\mod{k}\simeq\modst{\Lambda}$ is the only possible one, with shift functor the identity and exact triangles the direct sums of the trivial ones. Moreover, the ordinary self\-/equivalence of $\mod{k}$ induced by any automorphism $\sigma_\alpha$ of $\Lambda$ is the identity. However, the \emph{exact} self-equivalence is not, since the natural isomorphism $\varphi_{\sigma_\alpha}\colon k\cong k$ is multiplication by $\alpha$.
	This holds because we have a commutative diagram in $\mod{\Lambda}$ as follows:
	\[
	\begin{tikzcd}
	k \arrow[r, "i", hook] \arrow[d, "\alpha"'] & \Lambda \arrow[r, "p", two heads] \arrow[d, "f"'] & k \arrow[d, "\id{k}"] \\
	k \arrow[r, "i"', hook] & \Lambda_{\sigma_\alpha} \arrow[r, "p"', two heads] & k
	\end{tikzcd}
	\]
	Here $i(1)=\varepsilon$ and $f\colon\Lambda\to\Lambda_{\sigma_\alpha}$ is the only morphism in $\mod{\Lambda}$ satisfying $f(1)=1$. The square on the right is obviously commutative, and the square on the left commutes since
	\[
		fi(1)=f(\varepsilon)=f(1)\star \varepsilon=\sigma_{\alpha}(\varepsilon)=\alpha\varepsilon=i(\alpha).
	\]
\end{example}

\section{The example}\label{sec:example}

In this section we consider the preprojective algebra of generalized Dynkin type $\L{2}$ over an algebraically closed field $k$,
\[\Lambda=P(\L{2}),\]
presented in \eqref{eq:presentation}. A basis of this algebra is given by
\begin{equation}\label{eq:basis}
	\begin{array}{rlrl}
		\{e_1,b,b^2,b^3\}&\subset e_1\Lambda e_1,\hspace{2cm}&
		\{a_1,ba_1\}&\subset e_1\Lambda e_2,\\
		\{a_2,a_2b\}&\subset e_2\Lambda e_1,&
		\{e_2,a_2ba_1\}&\subset e_2\Lambda e_2.
	\end{array}
\end{equation}
Here $\{e_1, e_2\}$ is the complete orthogonal set of primitive idempotents corresponding to the vertices. The \emph{radical} of $\Lambda$ will be denoted by $\rad{\Lambda}\subset\Lambda$. A basis of $\rad{\Lambda}$ is given by the elements of the previous bases except for $e_1$ and $e_2$.

The algebra $\Lambda$ satisfies all assumptions in \Cref{thm:non-algebraic}, in particular it is self\-/injective (actually symmetric by \cite[Corollary 4]{bialkowski_erdmann_skowronski_2011_deformed_preprojective_algebras}) and $\Lambda$ is twisted $3$-periodic with $\Omega^{3}_{\Lambda^e}\Lambda\cong \Lambda_{\nu}$ in \(\modst{\Lambda^e}\) for the automorphism $\nu\in\Aut{\Lambda}$ defined by
\begin{equation}\label{eq:nu_automorphism}
	\nu\colon \Lambda \stackrel{\cong}{\longrightarrow} \Lambda\colon\;\;
	e_1 \mapsto e_1, \;\;
	e_2 \mapsto e_2, \;\;
	a_1 \mapsto -a_1, \;\;
	a_2 \mapsto -a_2, \;\;
	b \mapsto -b.
\end{equation}
An explicit exact sequence of $\Lambda$-bimodules
\begin{equation*}\label{eq:extension}
	\Lambda_{\nu}\hookrightarrow Q_2\longrightarrow Q_1\longrightarrow Q_0\twoheadrightarrow \Lambda
\end{equation*}
with projective middle terms
\begin{align}
	\label{eq:cover_bimodule} Q_0=Q_2&=\Lambda e_1\otimes e_1\Lambda\oplus \Lambda e_2\otimes e_2\Lambda,\\
	\nonumber Q_1&=\Lambda e_1\otimes e_2\Lambda\oplus \Lambda e_2\otimes e_1\Lambda\oplus \Lambda e_1\otimes e_1\Lambda,
\end{align}
is constructed in \cite[Proposition 2.3]{bialkowski_erdmann_skowronski_2007_deformed_preprojective_algebras}. Here, each $Q_n$ is the projective cover of the $n$\textsuperscript{th} syzygy of $\Lambda$.

\begin{proposition}\label{prop:exotic_automorphism}
	For any algebraically closed field $k$, the algebra automorphism $\phi\in\Aut{\Lambda}$ defined by
	\[\phi\colon \Lambda \stackrel{\cong}{\longrightarrow} \Lambda\colon\;\;
	e_1 \mapsto e_1, \;\;
	e_2 \mapsto e_2, \;\;
	a_1 \mapsto a_1, \;\;
	a_2 \mapsto a_2, \;\;
	b \mapsto b+b^3,\]
	represents a non-trivial class in $\Out{\Lambda}$ which is in the kernel of $\Out{\Lambda}\cong\Pic{\Lambda}\to\Picst{\Lambda}$.
\end{proposition}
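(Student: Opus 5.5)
The proof has three parts: $\phi$ is a well-defined automorphism; its class in $\Out{\Lambda}$ is non-trivial; and $(-)_\phi$ is isomorphic to the identity as an \emph{exact} functor of $\modst{\Lambda}$.

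\emph{Well-definedness.} From the basis \eqref{eq:basis}, $b^4=0$, since $b^4=a_1a_2a_1a_2=0$. Hence
\[(b+b^3)^2=b^2+2b^4+b^6=b^2=a_1a_2,\]
and the relation $a_2a_1=0$ is untouched. So $\phi$ is a well-defined algebra endomorphism. It induces the identity on $\rad{\Lambda}/\rad{\Lambda}^2$ and fixes the idempotents, so it is bijective (its inverse sends $b\mapsto b-b^3$).

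\emph{Non-triviality in $\Out{\Lambda}$.} Suppose $\phi(x)=u^{-1}xu$ for some unit $u$. Since $\phi(e_i)=e_i$, the unit $u$ commutes with $e_1,e_2$, so $u\in e_1\Lambda e_1\oplus e_2\Lambda e_2$. Write $u_1=ue_1=e_1u\in e_1\Lambda e_1$; this is a unit of $e_1\Lambda e_1$. The relation $bu=u\phi(b)=ub+ub^3$, multiplied by $e_1$, gives
\[bu_1=u_1b+u_1b^3.\]
Now $e_1\Lambda e_1=\operatorname{span}\{e_1,b,b^2,b^3\}$ consists of polynomials in $b$ and is commutative. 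Hence $u_1b^3=0$. Writing $u_1=\lambda e_1+(\text{radical})$ with $\lambda\neq0$, we get $u_1b^3=\lambda b^3\ne0$, a contradiction.

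\emph{Kernel of $\Pic{\Lambda}\to\Picst{\Lambda}$.} The key observation is that $b^3$ acts as zero on any module without projective summands. Indeed, suppose $m\in Me_1$ and $mb^3\neq0$. Then the map $e_1\Lambda\to M$, $x\mapsto mx$, is non-zero on $\soc{e_1\Lambda}=kb^3$, hence injective. Since $\Lambda$ is self-injective, $e_1\Lambda$ is injective, so this copy of $P^1$ splits off $M$. Therefore, on every module $M$ without projective summands, $b$ and $b+b^3$ act identically, and $M_\phi=M$ on the nose. Every object of $\modst{\Lambda}$ is isomorphic to such an $M$, so I would define $\zeta(M)=\id{M}$ on this full subcategory, which is equivalent to $\modst{\Lambda}$. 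Naturality is then automatic, because $(-)_\phi$ is the identity on morphisms. A stable morphism represented by $f\colon M\to N$ is sent to the class of the same $f$, whether or not the equivalence with projective-free objects is used.

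\emph{Main obstacle: compatibility with $\varphi_\phi$.} It remains to check that $\zeta$ is a morphism of exact functors, i.e.\ that \eqref{eq:triangulated_natural_transformation} commutes. Concretely, for $M$ without projective summands, $\varphi_\phi(M)\colon\Omega M\to(\Omega M)_\phi=\Omega M$ must be the identity in $\modst{\Lambda}$. I would compute it as follows. Take $P_M=\bigoplus_j e_{i_j}\Lambda$ with generators $g_j$, and lift $\id{M}$ to $g\colon P_M\to(P_M)_\phi$ by $g(g_j)=g_j$. Then
\[g(g_jx)=g_j\phi(x),\]
so $g-\id{}$ sends $g_jx$ to $g_j(\phi(x)-x)$. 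For a path $x$, $\phi(x)-x$ is a sum of paths containing $b^3$ times further arrows, hence lies in $kb^3=\soc{e_1\Lambda}$, or is $0$. The restriction of $g-\id{}$ to $\Omega M\subset\rad{P_M}$ thus lands in the socle of $P_M$, and I must show that this restriction $\Omega M\to\Omega M$ is stably zero.

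I would first try to prove it is actually zero. Since $\Omega M\subseteq\rad{P_M}$, its elements are combinations $\sum g_jy_j$ with $y_j\in\rad{\Lambda}$. The only contributions from $\phi(y)-y$ come from $y$ having a component $b\cdot z$ with $bz\ne0$ and $b^3z\ne0$, i.e.\ from the $b$-component of $y$ when $g_j$ generates a copy of $P^1$. Using that $\Omega M$ has no projective summand, hence $(\Omega M)b^3=0$ by the observation above, I would try to show these contributions cancel. If that fails, I would fall back on the finite representation type. I would list the finitely many indecomposable non-projective modules, via the stable AR quiver as in \cite{dieterich_wiedemann_1986_auslanderreiten_quiver_simple}, and check case by case that the socle-valued map $\Omega M\to\Omega M$ factors through a projective. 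This last step is where I expect the real work to lie.
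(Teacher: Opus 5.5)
\textbf{There is a genuine gap: the step that carries the content of the proposition is not proved, and your first strategy for it cannot work.}

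Your well-definedness and non-triviality arguments are correct and essentially the paper's. Your socle argument that $b^3$ annihilates every module without projective summands is a nice self-contained substitute for the paper's citation; it gives $M_\phi=M$ and the natural isomorphism with identity components. The gap is the final step: you never show that $\varphi_\phi(M)$ is the identity in $\modst{\Lambda}$.

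Here is why the first strategy fails. Check your lift on the bases $\{b,b^2,b^3,a_1,ba_1\}$ of $\rad{e_1\Lambda}$ and $\{a_2,a_2b,a_2ba_1\}$ of $\rad{e_2\Lambda}$. You find $g(\omega)-\omega=\omega b^2$ for every $\omega\in\rad{P_M}$. So $g$ restricts to the identity on $\Omega M$ exactly when $(\Omega M)b^2=0$; the condition $(\Omega M)b^3=0$ is irrelevant. This fails in general. For $M$ the simple top of $e_1\Lambda$, $\Omega M=\rad{e_1\Lambda}$ is indecomposable non-projective, yet $g(b)=b+b^3\neq b$. There is no cancellation to be found: $g|_{\Omega M}$ is only stably the identity. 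Your fallback (``check case by case'') states this goal but gives no mechanism for proving it.

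The missing idea is that the stable class of $\varphi_\phi(M)$ does not depend on the lift of $\id{M}$, so you should change the lift rather than the map.
\begin{itemize}
\item The paper uses your $g$ when $(\Omega M)b^2=0$.
\item When $Mb^2=0$ it uses instead $e_1\mapsto e_1-b^2$ on each $P^1$ summand. This restricts to the identity on all of $\rad{P_M}$, since $(e_1-b^2)(b+b^3)=b$.
\item The stable Auslander--Reiten quiver shows that all but two of the $18$ indecomposable non-projectives satisfy one of these two conditions.
\item The two exceptions are the pentagons $\Lambda/(b-a_2)\Lambda$ and $(e_1\Lambda\oplus e_1\Lambda)/((b^2,-b)\Lambda+(0,a_1)\Lambda)$. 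They get the explicit lifts $1\mapsto 1-(b-a_2)b$ and $(e_1,0)\mapsto(e_1,0)-(b^2,-b)$.
\end{itemize}

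Alternatively, your own formula gives a more uniform finish. Since $b^2$ is central, right multiplication by $b^2$ is an endomorphism of every module, and it is lifted to $P_M$ by the same multiplication. Hence its stable class vanishes on $\Omega M$ if and only if it vanishes on $M$. It certainly vanishes on modules killed by $b^2$. Every $\Omega$-orbit (a dotted row of the quiver) contains such a module. Either way, the classification of indecomposables has to be used in earnest, and your proposal does not do this.
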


\begin{proof}
	It is straightforward to check that the formulas in the statement are compatible with the relations, so they define an endomorphism $\phi$ of $\Lambda$. It is an isomorphism because it fixes all elements in the basis of $\Lambda$ in \eqref{eq:basis} except for $b$, which is sent to $\phi(b)=b+b^3$. Therefore, the matrix of $\phi$ with respect to that basis is an elementary matrix.

	If $\phi$ were an inner automorphism, there would exist a unit $u\in\Lambda^{\times}$ such that
	\[ub=(b+b^3)u.\]
	Assume this happens.
	Then, in particular, $e_1ub=(b+b^3)ue_1$.
	Since $e_1\Lambda e_1=k[b]/(b^4)$ is commutative,
	\begin{equation*}
		e_1ub=e_1ue_1b
		=be_1ue_1
		=bue_1,
	\end{equation*}
	therefore $b^3ue_1=0$. This is impossible because $\Lambda/\rad{\Lambda}\cong k\times k$ so $u$ modulo the radical is $\alpha_1e_1+\alpha_2e_2$ for some $\alpha_1,\alpha_2\in k$, $\alpha_1\neq0\neq\alpha_2$. Since $\rad{\Lambda}^4=0$, $b^3u=b^3(\alpha_1e_1+\alpha_2e_2)$, hence	$b^3ue_1=\alpha_1b^3\neq0$.

	We finally check that $[\phi]\in\Out{\Lambda}$ is in the kernel of $\Out{\Lambda}\cong\Pic{\Lambda}\to\Picst{\Lambda}$. We must define an isomorphism of exact functors
	\[\alpha(M)\colon M\stackrel{\cong}{\longrightarrow} M_{\phi},\qquad \alpha\colon(\id{\modst{\Lambda}},\id{\Omega})\stackrel{\cong}{\longrightarrow}((-)_{\phi},\varphi_{\phi}).\]
	By \cite[Proposition 8.69]{jensen_lenzing_1989_modeltheoretic_algebra_particular}, $\soc{\Lambda} = (b^3,a_2ba_1)$ annihilates any indecomposable non-projective object $M$ in $\mod{\Lambda}$, in particular $M_\phi=M$. Any object in $\modst{\Lambda}$ is isomorphic to a direct sum of indecomposable non-projective objects in $\mod{\Lambda}$. For any such object we can set
	\[\alpha(M)=\id{M}.\]
	This defines a natural isomorphism
	\[\alpha(M)\colon M\stackrel{\cong}{\longrightarrow} M_{\phi},\qquad \alpha\colon\id{\modst{\Lambda}}\stackrel{\cong}{\longrightarrow}(-)_{\phi}.\]
	It is only left to check that the square \eqref{eq:triangulated_natural_transformation} commutes in this case, i.e.~the following square in $\modst{\Lambda}$ commutes:
	\begin{equation}\label{eq:commutative_square}
		\begin{tikzcd}[column sep=large]
			\Omega(M)\ar[d,"\cong","\id{\Omega(M)}"']\ar[r,"\Omega\alpha(M)"]&\Omega(M_{\phi}) \ar[d,"\cong"',"\varphi_{\phi}(M)"]\\
			\Omega(M)\ar[r,"\alpha(\Omega(M))"]&\Omega(M)_{\phi}
		\end{tikzcd}
	\end{equation}
	Since $\Omega(M)$ is also a direct sum of indecomposable non-projective objects in $\mod{\Lambda}$, we can omit the twisting by $\phi$ in all modules in \eqref{eq:commutative_square}. By definition of $\alpha$, horizontal maps in \eqref{eq:commutative_square} are identity maps, so the only possible non-identity map is $\varphi_{\phi}(M)$, which fits in a commutative diagram as follows, where $p\colon P\twoheadrightarrow M$ is a chosen projective cover:
	\[
	\begin{tikzcd}
	\Omega M \arrow[r, "j", hook] \arrow[d, "\varphi_{\phi}(M)"'] & P \arrow[r, "p", two heads] \arrow[d, "f"'] & M \arrow[d, "\id{M}"] \\
	\Omega M \arrow[r, "j"', hook] & P_{\phi} \arrow[r, "p"', two heads] & M
	\end{tikzcd}
	\]
	In order to show that \eqref{eq:commutative_square} is commutative, we must prove that $\varphi_{\phi}(M)$ is the identity in $\modst{\Lambda}$.
	This is equivalent to showing that we can find a map $f\colon P\to P_{\phi}$ such that the following diagram commutes in $\mod{\Lambda}$:
	\begin{equation}\label{eq:trivialization}
	\begin{tikzcd}
	\Omega M \arrow[r, "j", hook] \arrow[d, "\id{\Omega M}"'] & P \arrow[r, "p", two heads] \arrow[d, "f"'] & M \arrow[d, "\id{M}"] \\
	\Omega M \arrow[r, "j"', hook] & P_{\phi} \arrow[r, "p"', two heads] & M
	\end{tikzcd}
	\end{equation}
	We now explain how to obtain $f$ in the following two cases: if $(\Omega M)\cdot b^2=0$ and if $M\cdot b^2=0$.

	The indecomposable projective $e_2\Lambda$ satisfies $e_2\Lambda = e_2\Lambda_\phi$ because $e_2\Lambda b^3=0$, actually $e_2\Lambda b^2=0$. For $e_1\Lambda$, the unique morphism $\zeta\colon e_1\Lambda \to e_1\Lambda_\phi$ in $\mod{\Lambda}$ fixing $e_1$ is an isomorphism given by $\zeta(e_1\lambda)= e_1\phi(\lambda)$. Let $f\colon P\to P_{\phi}$ be the isomorphism defined by a choice of direct sum decomposition $P\cong e_1\Lambda^p \oplus e_2\Lambda^q$, the isomorphism $\zeta$ and the identity in $e_2\Lambda$. The square on the right of \eqref{eq:trivialization} commutes because $M\cdot b^3=0$. If $(\Omega M)\cdot b^2=0$ then the square on the left also commutes. Indeed, $\Omega M\subset \rad{P}$, $\phi$ fixes all elements in the following basis of $\rad{e_1\Lambda}$ except for $b$:
	\begin{equation}\label{eq:basis_radical_e1}
			\{b,b^2,b^3,a_1,ba_1\}\subset \rad{e_1\Lambda},
	\end{equation}
	and any $\lambda\in\rad{e_1\Lambda}$ with $\lambda\cdot b^2=0$ must have trivial coefficient in $b$ since $x\cdot b^2=0$ for any element in the previous basis other than $x=b$ and $b\cdot b^2=b^3\neq 0$. Therefore, $f$ fixes the subspace of $\rad{P}$ annihilated by $b^2$, in particular it restricts to the identity in $\Omega M$.

	The unique morphism $\xi\colon e_1\Lambda \to e_1\Lambda_\phi$ in $\mod{\Lambda}$ mapping $\xi(e_1)= e_1-b^2$ is an isomorphism with $\xi(e_1\lambda)= (e_1-b^2)\phi(\lambda)$. As above, this isomorphism gives rise to another $f\colon P\to P_{\phi}$. If $M\cdot b^2=0$ then the square on the right of \eqref{eq:trivialization} commutes. The square on the left also commutes, even without that hypothesis. Indeed, $\xi$ restricts to the identity in $\rad{e_1\Lambda}$ because, if $x\neq b$ belongs to the basis \eqref{eq:basis_radical_e1} then $b^2\cdot x=0$ so $\xi(x)=x$, and $\xi(b)=(e_1-b^2)(b+b^3)=b$ too. Hence, the new $f$ restricts to the identity in $\rad{P}\supset\Omega M$.

	It suffices to construct an $f$ making \eqref{eq:trivialization} commutative for any indecomposable non-projective $M$ in $\mod{\Lambda}$. We now describe these objects and check that all of them, except for two, satisfy $(\Omega M)\cdot b^2=0$ or $M\cdot b^2=0$, so they are covered by the two previous paragraphs. The two exceptional cases will be treated separately.

	The algebra $\Lambda$ has finite representation type. Its stable Auslander--Reiten quiver was computed in \cite{crawley-boevey_2019_noncommutative_algebra_1}, by endowing $\Lambda/\soc{\Lambda}$ with the grading where all arrows have degree $1$:
	\[
	\begin{tikzcd}
	1 \arrow[r, shift left, "a_1"] \arrow[loop right, "b"', in=210,out=150,looseness=5] & 2, \arrow[l, shift left, "a_2"]
	\end{tikzcd}
	\qquad
	a_1a_2=b^2,\qquad a_2a_1=0,\qquad b^3=0,\qquad a_2ba_1=0,
	\]
	and considering the covering
	\[
	\begin{tikzcd}
		\vdots\ar[d,"b"']\ar[rd,"a_1"', near start]&\vdots\ar[ld,"a_2", near start]\\
		7\ar[d,"b"']\ar[rd,"a_1"', near start]&8\ar[ld,"a_2", near start]\\
		5\ar[d,"b"']\ar[rd,"a_1"', near start]&6\ar[ld,"a_2", near start]\\
		3\ar[d,"b"']\ar[rd,"a_1"', near start]&4\ar[ld,"a_2", near start]\\
		1&2
	\end{tikzcd}
	\]
	The following piece of the Auslander--Reiten quiver of this covering projects to the stable Auslander--Reiten quiver of $\Lambda$, identifying the left and right boundaries as indicated:
	\begin{equation*}
		\begin{tikzpicture}[xscale = 1.8, yscale = .8]
			\node [draw, inner sep=2pt, minimum size=1.5em] (3) at (0,0) {$\begin{smallmatrix*}[l]3\end{smallmatrix*}$};
			\node [draw, regular polygon,regular polygon sides=5, inner sep=0pt, minimum size=1.5em] (53164) at (2,0) {$\begin{smallmatrix*}[l]5&6\\3&4\\1&\end{smallmatrix*}$};
			\node [draw, circle, inner sep=1pt, minimum size=1.5em] (75364) at (4,0) {$\begin{smallmatrix*}[l]7&\\5&6\\3&4\end{smallmatrix*}$};
			\node [draw, circle, inner sep=1pt, minimum size=1.5em] (5314) at (0,-1.25) {$\begin{smallmatrix*}[l]5&\\3&4\\1&\end{smallmatrix*}$};
			\node [draw, inner sep=2pt, minimum size=1.5em] (36) at (2,-1.25) {$\begin{smallmatrix*}[l]&6\\3&\end{smallmatrix*}$};
			\node [draw, inner sep=2pt, minimum size=1.5em] (54) at (4,-1.25) {$\begin{smallmatrix*}[l]5&\\&4\end{smallmatrix*}$};
			\node [draw, regular polygon,regular polygon sides=5, inner sep=0pt, minimum size=1.5em] (532142) at (-1,-2) {$\begin{smallmatrix*}[l]5&\\3^2&4\\1&2\end{smallmatrix*}$};
			\node [draw, circle, inner sep=1pt, minimum size=1.5em] (532164) at (1,-2) {$\begin{smallmatrix*}[l]5&6\\3^2&4\\1&\end{smallmatrix*}$};
			\node [draw, inner sep=2pt, minimum size=1.5em] (5364) at (3,-2) {$\begin{smallmatrix*}[l]5&6\\3&4\end{smallmatrix*}$};
			\node [inner sep=2pt, minimum size=1.5em, lightgray] (752364) at (5,-2) {$\begin{smallmatrix*}[l]7&\\5^2&6\\3&4\end{smallmatrix*}$};
			\node [draw, circle, inner sep=1pt, minimum size=1.5em] (5321642) at (0,-4) {$\begin{smallmatrix*}[l]5&6\\3^2&4\\1&2\end{smallmatrix*}$};
			\node [draw, inner sep=2pt, minimum size=1.5em] (534) at (2,-4) {$\begin{smallmatrix*}[l]5&\\3&4\end{smallmatrix*}$};
			\node [draw, inner sep=2pt, minimum size=1.5em] (536) at (4,-4) {$\begin{smallmatrix*}[l]5&6\\3&\end{smallmatrix*}$};
			\node [draw, inner sep=2pt, minimum size=1.5em] (3164) at (-1,-6) {$\begin{smallmatrix*}[l]&6\\3&4\\1&\end{smallmatrix*}$};
			\node [draw, inner sep=2pt, minimum size=1.5em] (5342) at (1,-6) {$\begin{smallmatrix*}[l]5&\\3&4\\&2\end{smallmatrix*}$};
			\node [draw, inner sep=2pt, minimum size=1.5em] (53) at (3,-6) {$\begin{smallmatrix*}[l]5\\3\end{smallmatrix*}$};
			\node [inner sep=2pt, minimum size=1.5em, lightgray] (5386) at (5,-6) {$\begin{smallmatrix*}[l]&8\\5&6\\3&\end{smallmatrix*}$};
			\node [draw, inner sep=2pt, minimum size=1.5em] (4) at (0,-8) {$\begin{smallmatrix*}[l]4\end{smallmatrix*}$};
			\node [draw, inner sep=2pt, minimum size=1.5em] (532) at (2,-8) {$\begin{smallmatrix*}[l]5&\\3&\\&2\end{smallmatrix*}$};
			\node [draw, inner sep=2pt, minimum size=1.5em] (538) at (4,-8) {$\begin{smallmatrix*}[l]&8\\5&\\3&\end{smallmatrix*}$};
			\draw[densely dotted] (-1,0) to (3) to (53164) to (75364) to (5,0);
			\draw[densely dotted] (-1,-1.25) to (5314) to (36) to (54) to (5,-1.25);
			\draw[densely dotted] (532142) to (532164) to (5364) to (752364);
			\draw[densely dotted] (-1,-4) to (5321642) to (534) to (536) to (5,-4);
			\draw[densely dotted] (3164) to (5342) to (53) to (5386);
			\draw[densely dotted] (-1,-8) to (4) to (532) to (538) to (5,-8);
			\draw[->] (-1,-9) to (3164) to (532142) to (-1,1);
			\draw[->] (5,-9) to (5386) to (752364) to (5,1);
			\draw[-] (532142) to (3) to (532164) to (53164) to (5364) to (75364) to (752364) to (54) to (5364) to (36) to (532164) to (5314) to (532142) to (5321642) to (532164) to (534) to (5364) to (536) to (752364);
			\draw[-] (3164) to (5321642) to (5342) to (534) to (53) to (536) to (5386) to (538) to (53) to (532) to (5342) to (4) to (3164);
		\end{tikzpicture}
		\end{equation*}
	Here, dimension vectors are indicated by placing each vertex's dimension as an exponent (omitting exponents equal to $1$ and $0$-dimensional vertices). Since $\Lambda$ is symmetric, the Auslander--Reiten translation is $\tau=\Omega^2$ (\cite[Corollary 8.6]{skowronski_yamagata_2011_frobenius_algebras}). Its action on the stable Auslander--Reiten quiver consists of moving one step to the left along the dotted lines. Moreover, $\Omega^3$ acts as the identity by \cite[Corollary 2.5]{bialkowski_erdmann_skowronski_2007_deformed_preprojective_algebras}, although $\Omega^3$ is not naturally isomorphic to the identity. Therefore, $\Omega$ moves vertices of the stable Auslander--Reiten quiver one step to the right.

	The vertices of the stable Auslander--Reiten quiver are decorated with three kinds of shapes: squares, circles and pentagons.
	The dimension vectors show that squares satisfy $M\cdot b^2=0$, and circles satisfy $(\Omega M)\cdot b^2=0$ since the vertex on the right is a square. In order to finish this proof, we now construct a morphism $f$ making \eqref{eq:trivialization} commutative for the two pentagons.

	The pentagon in the top row is $M=\Lambda/(b-a_2)\Lambda$. In this case \eqref{eq:trivialization} is
	\begin{equation*}
		\begin{tikzcd}
		(b-a_2)\Lambda \arrow[r, "j", hook] \arrow[d, "\id{}"'] & \Lambda \arrow[r, "p", two heads] \arrow[d, "f"'] & \Lambda/(b-a_2)\Lambda \arrow[d, "\id{}"] \\
		(b-a_2)\Lambda \arrow[r, "j"', hook] & \Lambda_{\phi} \arrow[r, "p"', two heads] & \Lambda/(b-a_2)\Lambda
		\end{tikzcd}
	\end{equation*}
	where $f$ is the only morphism of $\Lambda$-modules mapping $f(1)=1-(b-a_2)b$, which is given by $f(\lambda)=(1-(b-a_2)b)\phi(\lambda)$. The square on the right commutes because $pf(\lambda)=p\phi(\lambda)=p(\lambda)$ since $p(b-a_2)=0$ and $M\cdot b^3=0$. The square on the left is also commutative since
	\begin{align*}
		fj((b-a_2)\lambda)&=(1-(b-a_2)b)\phi((b-a_2)\lambda)\\
		&=(1-b^2+a_2b)(b+b^3-a_2)\phi(\lambda)\\
		&=(b-a_2)\phi(\lambda)\\
		&=(b-a_2)\lambda\\
		&=j((b-a_2)\lambda).
	\end{align*}
	Here we use that $(b-a_2)b^3=0$.

	The other pentagon, in the third row, is
	\[M=\frac{e_1\Lambda\oplus e_1\Lambda}{(b^2,-b)\Lambda+(0,a_1)\Lambda}.\]
	In this case \eqref{eq:trivialization} is
	\begin{equation*}
		\begin{tikzcd}
		(b^2,-b)\Lambda+(0,a_1)\Lambda \arrow[r, "j", hook] \arrow[d, "\id{}"'] & e_1\Lambda\oplus e_1\Lambda \arrow[r, "p", two heads] \arrow[d, "f"'] & M \arrow[d, "\id{}"] \\
		(b^2,-b)\Lambda+(0,a_1)\Lambda \arrow[r, "j"', hook] & e_1\Lambda_{\phi}\oplus e_1\Lambda_{\phi} \arrow[r, "p"', two heads] & M
		\end{tikzcd}
	\end{equation*}
	where $f$ is the only morphism of $\Lambda$-modules satisfying
	\begin{align*}
		f(e_1,0)&=(e_1,0)-(b^2,-b),&
		f(0,e_1)&=(0,e_1),
	\end{align*}
	which is given by
	\[f(\lambda_1,\lambda_2)=(\phi(\lambda_1),\phi(\lambda_2))-(b^2,-b)\phi(\lambda_1).\]
	The square on the right commutes because $pf(\lambda_1,\lambda_2)=p(\phi(\lambda_1),\phi(\lambda_2))=p(\lambda_1,\lambda_2)$ since $p(b^2,-b)=0$ and $M\cdot b^3=0$. The square on the left is also commutative since
	\begin{align*}
		fj((b^2,-b)\lambda_1+(0,a_1)\lambda_2)&=f(b^2\lambda_1,a_1\lambda_2-b\lambda_1)\\
		&=(\phi(b^2\lambda_1),\phi(a_1\lambda_2-b\lambda_1))-(b^2,-b)\phi(b^2\lambda_1)\\
		&=(b^2\phi(\lambda_1),a_1\phi(\lambda_2)-(b+b^3)\phi(\lambda_1))-(b^2,-b)b^2\phi(\lambda_1)\\
		&=(b^2\lambda_1,a_1\lambda_2-(b+b^3)\lambda_1)-(b^2,-b)b^2\lambda_1\\
		&=(b^2\lambda_1,a_1\lambda_2-(b+b^3)\lambda_1+b^3\lambda_1)\\
		&=(b^2\lambda_1,a_1\lambda_2-b\lambda_1)\\
		&=j((b^2,-b)\lambda_1+(0,a_1)\lambda_2).
	\end{align*}
	Here we use that $b\cdot b^3=0$ and $a_1\cdot b^3=0$.
\end{proof}

\begin{corollary}\label{cor:non-algebraic}
	Let $k$ be any algebraically closed field. The category $\proj{P(\L{2})}$ has a pretriangulated category structure with inverse shift functor $M\mapsto M_\sigma$, for $\sigma=\nu\phi$ the composition of the automorphisms in \eqref{eq:nu_automorphism} and \Cref{prop:exotic_automorphism}, which is not an algebraic triangulated category structure.
\end{corollary}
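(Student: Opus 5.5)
This follows by applying \Cref{thm:non-algebraic} to $\Lambda=P(\L{2})$ with the class $[\phi]$ of \Cref{prop:exotic_automorphism}. I will check its hypotheses one at a time, then read off the conclusion.

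First, the hypotheses on the field and the algebra. The field $k$ is algebraically closed, hence perfect. The algebra $\Lambda$ is finite-dimensional (basis \eqref{eq:basis}) and basic, since $\Lambda/\rad{\Lambda}\cong k\times k$. It is connected, because the quiver in \eqref{eq:presentation} is connected: $e_1\Lambda e_2\neq0$, so $e_1$ and $e_2$ cannot be separated into distinct blocks. It is non-separable, since $\rad{\Lambda}\neq0$ and a separable algebra is semisimple. It is self-injective, indeed symmetric by \cite[Corollary 4]{bialkowski_erdmann_skowronski_2011_deformed_preprojective_algebras}.

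Second, twisted $3$-periodicity. The exact sequence of bimodules from \cite[Proposition 2.3]{bialkowski_erdmann_skowronski_2007_deformed_preprojective_algebras} has projective middle terms $Q_0,Q_1,Q_2$, so $\Omega^3_{\Lambda^e}(\Lambda)\cong\Lambda_\nu$ in $\modst{\Lambda^e}$, with $\nu$ as in \eqref{eq:nu_automorphism}. This means $\Lambda$ is twisted $3$-periodic. By \Cref{thm:non-algebraic}, $\proj{\Lambda}$ then has an essentially unique algebraic triangulated structure, and its inverse shift functor is $M\mapsto M_\nu$.

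Third, the class $[\phi]$. By \Cref{prop:exotic_automorphism} it is non-trivial in $\Out{\Lambda}$ and lies in the kernel of $\Out{\Lambda}\cong\Pic{\Lambda}\to\Picst{\Lambda}$. The last clause of \Cref{thm:non-algebraic} therefore gives a pretriangulated structure on $\proj{\Lambda}$ with inverse shift functor $M\mapsto M_\sigma$ for $\sigma=\nu\phi$, and this structure is not equivalent to the algebraic one. As a consistency check, computing $\sigma=\nu\phi$ gives $b\mapsto \nu(b+b^3)=-b-b^3$ and $a_i\mapsto -a_i$, which matches \eqref{eq:sigma_automorphism}.

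Finally, the structure is not algebraic at all. Any algebraic triangulated structure on $\proj{\Lambda}$ is equivalent to the unique one, whose inverse shift functor is $(-)_\nu$. An equivalence of pretriangulated structures on the same category $\proj{\Lambda}$ is implemented by an autoequivalence $\Phi$ with $\Phi\circ(-)_\nu\cong(-)_\sigma\circ\Phi$. Since $\Pic{\Lambda}\cong\Out{\Lambda}$, this would force $[\sigma]$ to be conjugate to $[\nu]$ in $\Out{\Lambda}$. The main point to justify is therefore that $[\nu]$ and $[\sigma]=[\nu][\phi]$ are not conjugate; the proof of \Cref{thm:non-algebraic} simply notes that $[\nu]\neq[\sigma]$.

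To settle conjugacy, I would note that $\nu$ acts by scalars on the generators $a_1,a_2,b$ and fixes the idempotents, so $\nu$ commutes with every automorphism that preserves the radical filtration up to higher terms. This suggests $[\nu]$ is central, or at least central modulo the relevant subgroup, in $\Out{\Lambda}$. If it is central, then any conjugate of $[\nu]$ equals $[\nu]$, which would contradict $[\sigma]\neq[\nu]$. I expect this centrality to be the main obstacle. Failing a clean proof of it, I would instead rely on the paper's convention that the two structures on $\proj{\Lambda}$ are compared with the identity underlying functor.
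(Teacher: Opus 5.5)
Your proposal follows the paper's route: the hypotheses of \Cref{thm:non-algebraic} for $P(\L{2})$ are checked at the start of \Cref{sec:example}, and the corollary is read off from \Cref{thm:non-algebraic} and \Cref{prop:exotic_automorphism} once $\sigma=\nu\phi$ is noted. Your conjugacy concern about the last step of the proof of \Cref{thm:non-algebraic} is fair, but it is not an obstacle, and it needs neither your scalar heuristic (which is not a proof) nor an appeal to a convention (being algebraic is invariant under arbitrary exact equivalences): for any $\psi\in\Aut{\Lambda}$, twisting both sides of a bimodule by $\psi$ commutes with $\Omega_{\Lambda^e}$ and fixes $\Lambda$, so $\Lambda_{\psi^{-1}\nu\psi}\cong\Lambda_{\nu}$ stably, hence honestly since both are indecomposable non-projective over the self-injective algebra $\Lambda^e$, and therefore $[\nu]$ is central in $\Out{\Lambda}$, so $[\sigma]$ can be conjugate to $[\nu]$ only if $[\phi]$ is trivial.
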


The automorphism $\sigma$ in \eqref{eq:sigma_automorphism} is $\sigma=\nu\phi$ for $\nu$ in \eqref{eq:nu_automorphism} and $\phi$ in \Cref{prop:exotic_automorphism}. Hence, this \namecref{cor:non-algebraic} follows from \Cref{thm:non-algebraic,prop:exotic_automorphism}. This proves the `non-algebraic' part of \Cref{thm:main}.

\section{Not even topological}\label{sec:not_topological}

Let $k$ be a perfect field and $\Lambda$ a finite-dimensional $k$-algebra. \Cref{thm:non-algebraic} gives a sufficient condition for the existence of a pretriangulated category structure on $\proj{\Lambda}$ which is not an algebraic triangulated structure over $k$. In this section we introduce further hypotheses guaranteeing that it is not topological, and in particular it is not algebraic over any other commutative ring.

Let $k$ be a commutative ring. Recall that a $k$-linear triangulated category $\T$ is \emph{algebraic} if it is  equivalent to a full triangulated subcategory of the stable category of a $k$-linear Frobenius abelian category \cite{keller_2007_differential_graded_categories}. Recall also that $\T$ is topological if it is equivalent to a full triangulated subcategory of the homotopy category of a stable model category \cite{schwede_2010_algebraic_topological_triangulated}.

Since $\proj{\Lambda}$ is small, if it is algebraic over some commutative ring then it is equivalent to a full triangulated subcategory of the derived category $\D{\A}$ of a small DG category $\A$ \cite[Theorem 3.8]{keller_2007_differential_graded_categories}, hence it is also topological \cite{schwede_2010_algebraic_topological_triangulated}.

\begin{theorem}\label{thm:non-topological}
	We place ourselves under the assumptions of \Cref{thm:non-algebraic}. Suppose further that the following conditions are satisfied:
	\begin{enumerate}
		\item\label{it:algebra_topological} There exists a $\Z$-algebra (i.e.~a ring) $\Lambda^{\Z}$ which is free as a $\Z$-module such that $\Lambda\cong\Lambda^{\Z}\otimes_{\Z}k$.
		\item\label{it:automorphism_topological} There exists $\sigma^{\Z}\in\Aut{\Lambda^{\Z}}$ such that $\sigma=\sigma^{\Z}\otimes_{\Z}k$.
		\item\label{it:module_topological} For any $M$ in $\mod{\Lambda}$, $M\cong M^{\Z}\otimes_{\Z}k$ for some $\Lambda^{\Z}$-module $M^{\Z}$ which is free as a $\Z$-module.
	\end{enumerate}
	Then, the pretriangulated category structure on $\proj{\Lambda}$ with inverse shift functor $M\mapsto M_{\sigma}$ is neither an algebraic triangulated category structure over any commutative ring nor a topological triangulated category structure.
\end{theorem}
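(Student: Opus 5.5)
The plan is to reduce everything to the topological case and then to run a Hochschild-type periodicity argument over $\Z$ instead of over $k$. As recalled just before the statement, a triangulated structure on the small category $\proj{\Lambda}$ that is algebraic over some commutative ring is also topological. It therefore suffices to show that the pretriangulated structure $(\proj{\Lambda},(-)_\sigma,\triangle_{\zeta\delta})$ of \Cref{thm:non-algebraic} is not topological.

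Suppose, for contradiction, that it is topological. Then $\proj{\Lambda}$, with this structure, is the homotopy category of a full subcategory of a stable model category. We may take a small spectral category $\A$ whose objects are lifts of the indecomposable projectives, with $\pi_0\A$ the $k$-linear category $\proj{\Lambda}$ and $\pi_*\A$ concentrated in the degrees forced by the shift. The restricted Toda brackets of $\A$ define a universal class in Hochschild--MacLane cohomology $\HML{3,-1}{\Lambda}$, with coefficients in the twisted bimodule $\Lambda_\sigma$. Following \cite{muro_schwede_strickland_2007_triangulated_categories_models,muro_2022_enhanced_finite_triangulated}, this class determines the exact triangles. The pretriangulated axioms (equivalently, the isomorphism of exact functors in \Cref{thm:Heller}) translate into the statement that this class is a unit, i.e.\ that cup product with it induces the periodicity isomorphism $\Omega^3\cong(-)_\sigma$ on $\modst{\Lambda}$.

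The heart of the proof is to show that the class actually lives in Hochschild cohomology, which is where hypotheses \eqref{it:algebra_topological}--\eqref{it:module_topological} enter. The key steps are as follows.
\begin{itemize}
	\item By \eqref{it:algebra_topological} and \eqref{it:automorphism_topological}, the twisted bimodule $\Lambda_\sigma$ is the base change $\Lambda^{\Z}_{\sigma^{\Z}}\otimes_{\Z}k$ of a $\Z$-free bimodule.
	\item Since $\Lambda^{\Z}$ is $\Z$-free, Shukla cohomology of $\Lambda^{\Z}$ agrees with its Hochschild cohomology over $\Z$.
	\item Via the universal coefficient/base change comparison for the $\Z$-free complexes, this Hochschild cohomology maps to $\HH{*}{\Lambda}{}$ over $k$ with coefficients in $\Lambda_\sigma$.
	\item I would then show that the comparison map from Hochschild cohomology to MacLane cohomology is surjective in the relevant bidegree onto the classes realized by the Toda brackets. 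The first discrepancy between $\operatorname{HML}$ and Shukla cohomology of $\Z$ (or of $\Fp{p}$) occurs in degrees $\geq 2p$ and is generated by Steenrod/Bockstein-type operations, so it should not affect degree $3$ after tensoring with the $\Z$-free data.
	\item Hypothesis \eqref{it:module_topological} is used to check the unit condition module by module. Every $M$ in $\mod{\Lambda}$ lifts to a $\Z$-free $M^{\Z}$, so the Toda bracket isomorphisms $\Omega^3M\cong M_\sigma$ are reductions of integral ones. This lets one identify the universal class with the image of a Hochschild class that is a unit, equivalently an isomorphism $\Omega^3_{\Lambda^e}(\Lambda)\cong\Lambda_\sigma$ in $\modst{\Lambda^e}$.
\end{itemize}

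Once we have $\Omega^3_{\Lambda^e}(\Lambda)\cong\Lambda_\sigma$, we compare with $\Omega^3_{\Lambda^e}(\Lambda)\cong\Lambda_\nu$ from \Cref{thm:non-algebraic}. This gives $\Lambda_\sigma\cong\Lambda_\nu$ in $\modst{\Lambda^e}$. Both bimodules are invertible with the same underlying $\Lambda$-modules, and $\Lambda$ is connected and non-separable, so this stable isomorphism lifts to a genuine bimodule isomorphism. A bimodule isomorphism $\Lambda_\sigma\cong\Lambda_\nu$ forces $[\sigma]=[\nu]$ in $\Out{\Lambda}$, contradicting the non-triviality of $[\phi]=[\nu^{-1}\sigma]$. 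Hence the structure is not topological, and therefore not algebraic over any commutative ring.

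The main obstacle is the comparison step: showing that, in cohomological degree $3$, the universal Toda bracket class in Hochschild--MacLane cohomology comes from $\Z$-linear Hochschild cohomology under conditions \eqref{it:algebra_topological}--\eqref{it:module_topological}. I would attack it first with the spectral sequence relating MacLane cohomology to Shukla cohomology over the free $\Z$-lift, using the vanishing of the MacLane-cohomology correction terms of $\Z$ in low degrees.
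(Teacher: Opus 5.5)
Your outer frame matches the paper's. You reduce to the topological case and take the restricted universal Toda bracket $\eta\in\HML{3}{\Lambda,\Lambda_\sigma}$, whose edge images $\edge{M}(\eta)$ are the Heller isomorphisms $\delta(M)$. You then pass to a $k$-linear Hochschild edge unit, deduce $\Omega^3_{\Lambda^e}(\Lambda)\cong\Lambda_\sigma$, lift $\Lambda_\sigma\cong\Lambda_\nu$ to a genuine bimodule isomorphism, and contradict $[\phi]\neq 1$.

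The gap is the middle step, and the mechanism you propose for it is wrong. You want $\eta$ to come from Hochschild cohomology, i.e.\ the comparison map to be surjective in degree $3$, because MacLane and Shukla cohomology supposedly first differ in degrees $\geq 2p$. But the comparison that matters is between $\HML{3}{\Lambda,\Lambda_\sigma}$ and $\HH{3}{\Lambda,\Lambda_\sigma}{k}$, and these already differ in degree $3$, for two independent reasons.
\begin{enumerate}
\item In characteristic $p$, Baues--Pirashvili give $\HML{3}{\Lambda,\Lambda_\sigma}=\HH{3}{\Lambda,\Lambda_\sigma}{\Fp{p}}\oplus\HML{1}{\Lambda,\Lambda_\sigma}$. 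The second summand is embedded by $-\smile[\xi_A]$ for a $\Z/(p^2)$-lift $A$ of $\Lambda$, and it is in general non-zero.
\item In every characteristic, the Hochschild cohomology you reach is over the prime field $\ell$, not over $k$. So it carries the extra summands $\HH{3-i}{\Lambda^\ell,\Lambda^\ell_{\sigma^\ell}}{\ell}\otimes_\ell\HH{i}{k,k}{\ell}$ with $i>0$, which need not vanish; for instance $\HH{1}{\mathbb{C},\mathbb{C}}{\Q}$ is the huge space of derivations of $\mathbb{C}$.
\end{enumerate}
Your numerical claim also fails where it matters: with mod-$2$ coefficients $\HML{3}{\Z,\Fp{2}}\cong\Fp{2}\neq 0$, so for $\Z$ the discrepancy starts in degree $2p-1$, which is $3$ when $p=2$. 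Hence there is no reason for $\eta$ to lie in the image of $\HH{3}{\Lambda,\Lambda_\sigma}{k}$. The map $\HH{\star}{\Lambda^{\Z},\Lambda^{\Z}_{\sigma^{\Z}}}{\Z}\to\HH{\star}{\Lambda,\Lambda_\sigma}{k}$ you build does not help produce a preimage.

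What is needed, and what the paper proves, is weaker. $\HH{3}{\Lambda,\Lambda_\sigma}{k}$ is a direct summand of $\HML{3}{\Lambda,\Lambda_\sigma}$, and every edge morphism $\edge{M}$ vanishes on its complement. So the component $\eta_1$ of $\eta$ in that summand has the same edge images $\delta(M)$ and is therefore a $k$-linear edge unit.

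This vanishing is where hypothesis (3) actually enters. Your use of it, that the isomorphisms $\Omega^3M\cong M_\sigma$ are reductions of integral ones, is unfounded: nothing makes $\delta(M)$ integral, only $M$ and its projective resolution lift. The paper handles the two complementary summands separately.
\begin{enumerate}
\item On the $\HH{i}{k,k}{\ell}$-summands, $\edge{M}$ vanishes because the resolution of $M$ is defined over $\ell$ and normalized cochains on $k$ vanish on $(1,\dots,1)$.
\item On the $\HML{1}{\Lambda,\Lambda_\sigma}\smile[\xi_A]$-summand, take $A=\Lambda^{\Z}\otimes_{\Z}W_2(k)$, a $\Z/(p^2)$-free ring with $A/(p)=\Lambda$. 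Perfectness of $k$ is needed here, since $\Lambda^{\Z}\otimes\Z/(p^2)$ only lifts $\Lambda^{\Fp{p}}$. Lift the resolution of $M$ via $M^{\Z}\otimes_{\Z}W_2(k)$ and compute $\xi_A(f_2,f_3)=g_2f_3+f_2g_3$. The term $g_2f_3$ factors through $f_3$, and the cocycle condition for $c_1$ at $(f_1,f_2)$ kills the term $f_2g_3$.
\end{enumerate}
Without these two vanishing arguments, or a substitute, your proof does not get from $\eta$ to a Hochschild class to which the bimodule-periodicity criterion applies.
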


The proof of \Cref{thm:non-topological}, at the very end of this section, is based on some cohomology computations. We now apply this \namecref{thm:non-topological} to the example in \Cref{sec:example}.

\begin{corollary}\label{cor:non-topological}
	Let $k$ be any algebraically closed field. The category $\proj{P(\L{2})}$ has a pretriangulated category structure with inverse shift functor $M\mapsto M_\sigma$, for $\sigma=\nu\phi$ the composition of the automorphisms in \eqref{eq:nu_automorphism} and \Cref{prop:exotic_automorphism}, which is neither an algebraic triangulated category structure over any commutative ring nor a topological triangulated category structure.
\end{corollary}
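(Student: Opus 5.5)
The plan is to verify the three hypotheses of \Cref{thm:non-topological} for $\Lambda=P(\L{2})$, with the automorphism $\sigma=\nu\phi$ of \eqref{eq:sigma_automorphism}. The hypotheses of \Cref{thm:non-algebraic} have already been checked at the start of \Cref{sec:example}, and \Cref{prop:exotic_automorphism} shows that $[\phi]$ is a non-trivial element of the kernel of $\Out{\Lambda}\to\Picst{\Lambda}$. The corollary is then immediate from \Cref{thm:non-topological}.

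For \eqref{it:algebra_topological}, take $\Lambda^{\Z}$ to be the path ring over $\Z$ of the quiver in \eqref{eq:presentation} modulo the relations $a_1a_2=b^2$ and $a_2a_1=0$. One checks that $\Lambda^{\Z}$ is free as a $\Z$-module on the twelve elements of \eqref{eq:basis}. To see this, note that the reduction system $a_2a_1\to0$, $b^2\to a_1a_2$ (or the reverse orientation) has only finitely many overlaps, all resolvable over $\Z$ because every coefficient is $\pm1$ or $0$. Hence the normal words are the same over $\Z$ as over any field, and $\Lambda^{\Z}\otimes_{\Z}k\cong\Lambda$. For \eqref{it:automorphism_topological}, the formulas for $\sigma$ have integer coefficients and respect the relations over $\Z$. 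In the basis \eqref{eq:basis}, the matrix of $\sigma$ is a signed elementary matrix, hence invertible over $\Z$. So $\sigma^{\Z}$ is an automorphism with $\sigma^{\Z}\otimes k=\sigma$.

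The main work is \eqref{it:module_topological}. Since $\Lambda$ has finite representation type and everything is additive, it suffices to treat the indecomposables. For projectives, take $e_i\Lambda^{\Z}$. For the non-projective indecomposables, use the list given by the stable Auslander--Reiten quiver in the proof of \Cref{prop:exotic_automorphism}. Each of these modules is a quotient of $e_1\Lambda\oplus e_1\Lambda$ or of a single indecomposable projective, or is given by an explicit graded representation of the covering. I would present each as the cokernel of a map between projectives whose matrix has entries that are integer combinations of paths, as in the two pentagon cases $\Lambda/(b-a_2)\Lambda$ and $(e_1\Lambda\oplus e_1\Lambda)/((b^2,-b)\Lambda+(0,a_1)\Lambda)$. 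Taking the same presentation over $\Lambda^{\Z}$ gives a module $M^{\Z}$ with $M^{\Z}\otimes k\cong M$, by right exactness of $-\otimes k$. One must also check that $M^{\Z}$ is $\Z$-free, that is, torsion-free. For this I would show that the image submodule is spanned over $\Z$ by a subset of a $\Z$-basis of the projective, or more generally is a saturated sublattice. That is automatic if the dimension of $M^{\Z}\otimes\Fp{p}$ is independent of $p$ and equal to the rank.

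The main obstacle is the uniformity of these presentations across characteristics, including characteristic $2$. In characteristic $2$ signs disappear, and the classification of indecomposables could a priori change. I would argue that the covering and the Auslander--Reiten quiver computation of \cite{crawley-boevey_2019_noncommutative_algebra_1} are characteristic-free, since the covering is a representation-finite graded quiver with zero and commutativity relations. Consequently every indecomposable is given by a $0/\pm1$ representation of the covering, which is visibly defined and free over $\Z$. This reduces the verification to inspecting the finitely many dimension vectors in the displayed quiver.
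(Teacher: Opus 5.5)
Your strategy is the paper's. You verify conditions (1)--(3) of \Cref{thm:non-topological} for the integral form $P_{\Z}(\L{2})$ with $\sigma^{\Z}=\nu^{\Z}\phi^{\Z}$, and you handle (3) through integral presentations of the finitely many indecomposables. Your treatment of (2) is fine. Two points need correcting or streamlining.

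For (1), the conclusion is right, and the paper simply asserts it, but your justification is false as stated: neither orientation of the two relations gives a confluent reduction system. With $b^2\to a_1a_2$, the self-overlap $b^3$ reduces to the distinct irreducible words $ba_1a_2$ and $a_1a_2b$. With $a_1a_2\to b^2$, the overlaps $a_1a_2a_1$ and $a_2a_1a_2$ give the irreducible words $b^2a_1$ and $a_2b^2$ against $0$. You must complete the system, e.g.\ to $a_1a_2\to b^2$, $a_2a_1\to 0$, $b^2a_1\to 0$, $a_2b^2\to 0$, $b^4\to 0$. All ambiguities then resolve over $\Z$, and the irreducible paths are exactly the ten (not twelve) elements of \eqref{eq:basis}.

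For (3), the paper gets $\Z$-freeness more directly than your rank count. The 18 matrices in the six minimal triangles of \eqref{eq:triangles} are already integral minimal presentations of the 18 non-projective indecomposables. These triangles remain exact twisted $3$-periodic sequences over $P_{\Z}(\L{2})$. Hence $M^{\Z}=\coker q$ is isomorphic to the image of the following map, i.e.\ to a submodule of a free $P_{\Z}(\L{2})$-module, and is therefore $\Z$-free.

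Your criterion ($\dim_{\Fp{p}}M^{\Z}\otimes\Fp{p}$ equal to the rank for every prime $p$) is valid. However, it needs one and the same integral presentation to give the correct dimension in every characteristic, including $0$. This is the same uniform input that exactness over $\Z$ encodes, since a complex of finitely generated free abelian groups is exact if and only if it is exact after reduction modulo every prime.

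Your covering argument for that uniformity asserts rather than proves the key step. ``Representation-finite with zero and commutativity relations'' does not by itself give, for each indecomposable, a $0/\pm1$ realization that satisfies $bb=a_1a_2$ over $\Z$ and reduces to the right module in each characteristic. As you say, this comes down to a finite case check, and \eqref{eq:triangles} is precisely that check.
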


\begin{proof}
	The hypotheses of \Cref{thm:non-algebraic} were checked in the previous section, culminating with \Cref{cor:non-algebraic}.

	We write $P_{\Z}(\L{2})$ for the preprojective algebra of generalized Dynkin type $\L{2}$ over the integers, presented by \eqref{eq:presentation}. It is $\Z$-free with the basis indicated in \eqref{eq:basis}. Moreover, the automorphisms $\nu$ and $\phi$ in \eqref{eq:nu_automorphism} and \Cref{prop:exotic_automorphism} are well-defined automorphisms of $P_{\Z}(\L{2})$. We denote them by $\nu^\Z$ and $\phi^\Z$ to indicate that the ground ring is $\Z$, and we also denote $\sigma^\Z=\nu^\Z\phi^\Z$.

	The hypotheses of \Cref{thm:non-topological} also hold. More precisely, \eqref{it:algebra_topological} holds because $P(\L{2})\cong P_{\Z}(\L{2})\otimes_{\Z}k$ and \eqref{it:automorphism_topological} because $\sigma=\sigma^\Z\otimes_{\Z}k$.

	Let us finally check \eqref{it:module_topological}. 
The 18 matrices in the 6 minimal exact triangles in \eqref{eq:triangles} are precisely the minimal presentations of the 18 indecomposable non-projective $P(\L{2})$-modules. This is easily checked by inspection using the stable Auslander–Reiten quiver in the proof of \Cref{prop:exotic_automorphism}. Those matrices have entries in $P_{\Z}(\L{2})$. The minimal exact triangles remain twisted $3$-periodic long exact sequences replacing $P^i=e_iP(\L{2})$ with $e_iP_{\Z}(\L{2})$, $i=1,2$. Therefore, the matrices present appropriate $P_{\Z}(\L{2})$-modules $M^\mathbb{Z}$ for the indecomposable non-projective $P(\L{2})$-modules $M$. In particular, $M^\mathbb{Z}$ is $\Z$-free because it is the kernel of a morphism between projective $P_{\Z}(\L{2})$-modules and $P_{\Z}(\L{2})$ is $\Z$-free.
\end{proof}

We now recall some basic facts on the Hochschild(--Mitchell) cohomology of $k$-algebras, and more generally $k$-linear categories, see e.g.~\cite{cartan_eilenberg_1956_homological_algebra,mitchell_1972_rings_several_objects}.

\begin{definition}\label{def:HH}
	Let $k$ be a commutative ring and $\A$ a $k$-linear category. The \emph{bar complex} $\B{\star}{\A}{}$ is the simplicial $\A$-bimodule with
	\[\B{n}{\A}{}=\bigoplus_{x_0,\dots,x_n}\A(x_0,-)\otimes\A(x_1,x_0)\otimes\cdots\otimes\A(x_n,x_{n-1})\otimes\A(-,x_n).\]
	This direct sum is indexed by all sequences of $n+1$ objects in $\A$.
	Face operators are
	\[d_i(f_0\otimes\cdots\otimes f_{n+1})=\cdots\otimes f_{i}f_{i+1}\otimes\cdots,\quad n\geq 1,\quad 0\leq i\leq n,\]
	and degeneracies are
	\[s_i(f_0\otimes\cdots\otimes f_{n+1})=\cdots\otimes f_i\otimes \id{x_i}\otimes f_{i+1}\otimes\cdots,\quad n\geq 0, \quad 0\leq i\leq n.\]
	Here $f_i\colon x_i\to x_{i-1}$.

	The bar complex is regarded as a chain complex in the usual way, defining the differential as $d=\sum_{i=0}^{n}(-1)^i d_i$ in each degree $n\geq 1$. It is a resolution of $\A$ as an $\A$-bimodule (projective if $\A$ is locally $k$-projective, e.g.~if $k$ is a field). We will also consider the homotopy equivalent \emph{normalized bar complex} $\nB{\star}{\A}{}$, which is the chain complex obtained by modding out the image of degeneracies in $\B{\star}{\A}{}$.

	Let $M$ be an $\A$-bimodule. The \emph{Hochschild complex} of $\A$ with coefficients in $M$ is
	\[\HC{\star}{\A,M}{}=\hom_{\A^e}(\B{\star}{\A}{},M).\]
	Degree-wise, we can identify
	\[\HC{n}{\A,M}{}=\prod_{x_0,\dots,x_n}\hom_k(\A(x_1,x_0)\otimes\cdots\otimes\A(x_n,x_{n-1}), M(x_n,x_0)).\]
	Hence, a \emph{Hochschild cochain} $c_n\in \HC{n}{\A,M}{}$ is a multilinear map which can be evaluated on sequences of $n$ composable morphisms in $\A$, i.e.~$c_n(f_1,\dots,f_n)$ or equivalently
	\[c_n(x_0\xleftarrow{f_1}x_1\leftarrow\cdots\leftarrow x_{n-1 }\xleftarrow{f_n}x_{n})\in M(x_n,x_0),\]

	The \emph{Hochschild cohomology} of $\A$ with coefficients in $M$ is the cohomology of the Hochschild complex, denoted by \[\HH{\star}{\A,M}{}\]
	We can also compute it as the cohomology of the \emph{normalized Hochschild complex}
	\[\nHC{\star}{\A,M}=\hom_{\A^e}(\nB{\star}{\A}{},M).\]
	With the previous identification, $\nHC{\star}{\A,M}\subset\HC{\star}{\A,M}{}$ is the subcomplex consisting of the Hochschild cochains which vanish when one entry is an identity morphism in $\A$. This condition is void in degree $0$. Hence, we say that cochains in $\nHC{n}{\A,M}$ are \emph{normalized with respect to identities}.

	When $M=\A$, the Hochschild complex $\HC{\star}{\A,\A}{}$ is a differential graded algebra with the \emph{cup product} defined as
	\[(c_p\smile c_q)(f_1,\dots,f_{p+q})=c_p(f_1,\dots,f_p)c_q(f_{p+1},\dots,f_{p+q}).\]
	Hence $\HH{\star}{\A,\A}{}$ is a graded algebra (actually graded commutative). Moreover, for $M$ an arbitrary $\A$-bimodule, $\HC{\star}{\A,M}{}$ is a differential graded $\HC{\star}{\A,\A}{}$-bimodule with the \emph{cup product} defined by the same formula, so $\HH{\star}{\A,M}{}$ is a graded $\HH{\star}{\A,\A}{}$-bimodule (not necessarily symmetric).

	When we want to stress the ground ring we are considering, we will add it as a subscript, e.g.~$\HC{\star}{\A,M}{k}$, $\HH{\star}{\A,M}{k}$. If $\ell\subset k$ is a subring then $\HC{\star}{\A,M}{k}\subset \HC{\star}{\A,M}{\ell}$ is the subcomplex of $k$-linear cochains. This inclusion induces a \emph{restriction of scalars} morphism in Hochschild cohomology
	\begin{equation}\label{eq:restriction_of_scalars}
		\HH{\star}{\A,M}{k}\longrightarrow\HH{\star}{\A,M}{\ell}.
	\end{equation}

	The forgetful functor from $k$-linear categories to ordinary categories has a left adjoint, sending an ordinary category $\C$ to the $k$-linear category $k\C$ with the same objects as $\C$ such that the morphism $k$-module $k\C(x,y)$ has basis $\C(x,y)$. If $M$ is a $k\C$-bimodule, the Hochschild complex $\HC{\star}{k\C,M}{}$ is given by
	\[\HC{n}{k\C,M}{}=\prod_{x_0,\dots,x_n}\Set(\C(x_1,x_0)\times\cdots\times\C(x_n,x_{n-1}), M(x_n,x_0))\]
	where $\Set$ is the category of sets and maps between them.

	If $\A$ is a $k$-linear category, we can still consider $k\A$. The Hochschild complex $\HC{\star}{k\A,\A}{}$ is a differential graded algebra with cup product defined as above and, if $M$ is an $\A$-bimodule, $\HC{\star}{k\A,M}{}$ is a differential graded $\HC{\star}{k\A,\A}{}$-bimodule. In particular, the Hochschild cohomology $\HH{\star}{k\A,\A}{}$ is a graded algebra and $\HH{\star}{k\A,M}{}$ is a graded $\HH{\star}{k\A,\A}{}$-bimodule.

	If $M$ is an $\A$-bimodule, the obvious counit $k$-linear functor $k\A\to\A$ induces an inclusion of Hochschild complexes $\HC{\star}{\A,M}{}\subset \HC{\star}{k\A,M}{}$. The small one consists of the multilinear cochains in the big one. This inclusion induces a \emph{comparison} morphism in Hochschild cohomology
	\begin{equation}\label{eq:comparison}
		\HH{\star}{\A,M}{}\longrightarrow\HH{\star}{k\A,M}{}.
	\end{equation}
\end{definition}

\begin{remark}\label{rem:HH_Morita_and_cocycle}
	Let $A$ be a $k$-algebra and $M$ an $A$-bimodule. By Morita invariance, the inclusion $A\hookrightarrow\proj{A}$ of the free $A$-module of rank $1$ in the category of finitely generated projective $A$-modules induces an isomorphism
	\[\HH{\star}{\proj{A},\hom_A(-,-\otimes_AM)}{}\cong \HH{\star}{A,M}{}.\]

	We record for later use that the cocycle condition for
	\[c\in\HC{1}{\proj{A},\hom_A(-,-\otimes_AM)}{}\]
	at a pair $(f_1,f_2)$ of composable morphisms in $\proj{A}$
	is
	\[c(f_1f_2)=(f_1\otimes_AM)c(f_2)+c(f_1)f_2\]
	for any pair of composable morphisms in $\proj{A}$,
	and the same formula holds after replacing $\proj{A}$ with $k\proj{A}$ in the Hochschild complex.
\end{remark}

\begin{definition}\label{def:edge_HH}
	Let $k$ be a field and $A$ a coherent $k$-algebra.
	Denote by $\proj{A}$ the category of finitely generated projective $A$-modules. Given an $A$-bimodule $M$ and a finitely presented $A$-module $N$, the \emph{edge morphism}
	\[\edge{N}\colon\HH{n}{A,M}{}=\HH{n}{\proj{A},\hom_A(-,-\otimes_AM)}{}\longrightarrow\ext_A^n(N,N\otimes_AM)\]
	is defined as follows. Choose a projective resolution of $N$ in $\mod{A}$, which is abelian by coherence,
	\[\cdots\to P_{i+1}\xrightarrow{f_{i+1}}P_{i}\xrightarrow{f_{i}}P_{i-1}\to\cdots\to P_0\stackrel{\pi}{\twoheadrightarrow} N.\]
	Given a Hochschild cohomology class $[c]$ in the source represented by a cocycle $c$ in the Hochschild complex of $\proj{A}$, the element $\edge{N}([c])\in\ext^n_A(N,N\otimes_A M)$ is represented by
	\[P_n\xrightarrow{c(f_1,\dots,f_n)} P_0\otimes_AM\xrightarrow{\pi\otimes_AM}N\otimes_AM.\]
\end{definition}

\begin{remark}
	The collection of edge morphisms $\edge{N}$ for all finitely presented $A$-modules $N$ assembles to an honest edge morphism of the second spectral sequence in \cite[Remark 5.4]{muro_2020_first_obstructions_enhancing} (ungraded version), compare \cite[(5.3)]{muro_2020_first_obstructions_enhancing}. In particular, they are well defined.
\end{remark}

\begin{proposition}\label{prop:HH_field_extensions}
	Let $\ell\subset k$ be a field extension, $A$ an $\ell$-algebra and $M$ an $A$-bimodule.
	\begin{enumerate}
		\item\label{it:field_extension_over_big} If $\dim_{\ell}A<\infty$ then $\HH{\star}{A\otimes_{\ell}k,M\otimes_{\ell}k}{k}\cong\HH{\star}{A,M}{\ell}\otimes_{\ell} k$ over $k$,
		\item\label{it:field_extension_over_small} If $\dim_{\ell}A<\infty$ and $\dim_{\ell}M<\infty$ then $\HH{\star}{A\otimes_{\ell}k,M\otimes_{\ell}k}{\ell}\cong\HH{\star}{A,M}{\ell}\otimes_{\ell}\HH{\star}{k,k}{\ell}$ over $\ell$.
	\end{enumerate}
	Under the assumptions of \eqref{it:field_extension_over_small}, and hence of \eqref{it:field_extension_over_big}, the restriction of scalars morphism in~\eqref{eq:restriction_of_scalars}
	\[\HH{\star}{A\otimes_{\ell}k,M\otimes_{\ell}k}{k}\longrightarrow
	\HH{\star}{A\otimes_{\ell}k,M\otimes_{\ell}k}{\ell}\]
	is the tensor product over $\ell$ of $\HH{\star}{A,M}{\ell}$ with the inclusion of $\HH{0}{k,k}{\ell}=k$ in $\HH{\star}{k,k}{\ell}$.
\end{proposition}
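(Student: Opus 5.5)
The plan is to compute both sides with the bar resolution and reduce everything to the base-change behaviour of $\hom$ along the field extension $\ell\subset k$.

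For \eqref{it:field_extension_over_big}, I would use that $A\otimes_\ell k$ is $k$-flat and that the bar resolution of $A\otimes_\ell k$ over $k$ is obtained from the one of $A$ over $\ell$ by base change:
\[\B{n}{A\otimes_\ell k}{}=\B{n}{A}{}\otimes_\ell k.\]
Hence
\[\HC{n}{A\otimes_\ell k,M\otimes_\ell k}{k}\cong\hom_\ell(A^{\otimes_\ell n},M\otimes_\ell k).\]
Since $\dim_\ell A<\infty$, each $A^{\otimes n}$ is finite-dimensional, so the canonical map
\[\hom_\ell(A^{\otimes n},M)\otimes_\ell k\to\hom_\ell(A^{\otimes n},M\otimes_\ell k)\]
is an isomorphism. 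This yields an isomorphism of complexes $\HC{\star}{A,M}{\ell}\otimes_\ell k\cong\HC{\star}{A\otimes_\ell k,M\otimes_\ell k}{k}$. Because $-\otimes_\ell k$ is exact, it commutes with cohomology, which proves \eqref{it:field_extension_over_big}.

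For \eqref{it:field_extension_over_small}, I would regard $A\otimes_\ell k$ as an $\ell$-algebra that is a tensor product of two $\ell$-algebras. The bimodule $M\otimes_\ell k$ is the external tensor product of the $A$-bimodule $M$ and the $k$-bimodule $k$. I would invoke the Künneth theorem for Hochschild cohomology over a field, in the style of Cartan–Eilenberg. Its proof runs as follows. The tensor product of the bar resolutions $\B{\star}{A}{}\otimes_\ell\B{\star}{k}{}$ is a projective resolution of $A\otimes_\ell k$ over $(A\otimes_\ell k)^e=A^e\otimes_\ell k^e$, by the Künneth formula for complexes over a field. Applying $\hom$ gives
\[\hom_{A^e\otimes k^e}(B(A)\otimes B(k),M\otimes k)\;\supseteq\;\hom_{A^e}(B(A),M)\otimes_\ell\hom_{k^e}(B(k),k).\]
The comparison map
\[\hom_\ell(A^{\otimes p},M)\otimes_\ell\hom_\ell(k^{\otimes q},k)\to\hom_\ell(A^{\otimes p}\otimes k^{\otimes q},M\otimes k)\]
is an isomorphism precisely because $A^{\otimes p}$ and $M$ are finite-dimensional, even though $k$ may be infinite-dimensional over $\ell$. (Write $A^{\otimes p}\otimes M^*$ and use finiteness to pull the first factor out of the $\hom$.) Then the algebraic Künneth theorem for a tensor product of cochain complexes over the field $\ell$ gives the cohomology isomorphism. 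This step is the main obstacle. The finiteness of $\dim_\ell M$ is essential here, and I would need to check the double-complex totalization carefully: the product over $p+q=n$ equals the sum, since it is finite in each degree.

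For the final claim, I would take a cocycle $c$ of $A$ and $\lambda\in k$. The $k$-linear extension $c\otimes\lambda$ in $\HC{\star}{A\otimes_\ell k,M\otimes_\ell k}{k}$ corresponds under \eqref{it:field_extension_over_big} to $[c]\otimes\lambda$. Viewed as an $\ell$-linear cochain, it is the image under the Künneth map of $c\otimes\lambda$, where $\lambda\in\HC{0}{k,k}{\ell}=k$. This uses the Alexander–Whitney type comparison between $B(A\otimes k)$ and $B(A)\otimes B(k)$: it pairs degree-$n$ cochains of $A$ with degree-$0$ cochains of $k$ via the $k$-linear extension. Naturality of the identifications then shows that restriction of scalars is $\mathrm{id}\otimes(\HH{0}{k,k}{\ell}\hookrightarrow\HH{\star}{k,k}{\ell})$.
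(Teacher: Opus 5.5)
Your proposal is correct and follows essentially the same route as the paper. For (1) you use the base-changed bar complex together with $\hom_\ell(A^{\otimes n},M)\otimes_\ell k\cong\hom_\ell(A^{\otimes n},M\otimes_\ell k)$. For (2) you use the resolution $B(A)\otimes_\ell B(k)$ with the bidegreewise splitting of $\hom$ coming from $\dim_\ell A,\dim_\ell M<\infty$, and your Alexander--Whitney comparison dualizes exactly to the paper's map $\rho_{p,q}$, whose $q=0$ component is the $k$-linear extension from which the final claim is read off.
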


\begin{proof}
	Let us first prove \eqref{it:field_extension_over_big}. We have an isomorphism of complexes
	\[\HC{\star}{A\otimes_{\ell}k,M\otimes_{\ell}k}{k}\cong\HC{\star}{A,M}{\ell}\otimes_{\ell}k\]
	given by
	\begin{align*}
		\hom_{k}((A\otimes_{\ell}k)^{\otimes_kn},M\otimes_{\ell}k)&\cong \hom_{\ell}(A^{\otimes_{\ell}n},M\otimes_{\ell}k)\\
		&\cong \hom_{\ell}(A^{\otimes_{\ell}n},M)\otimes_{\ell}k.
	\end{align*}
	Here, the first isomorphism is given by the extension and restriction of scalars adjunction, and the second one holds by finite-dimensionality of $A$ over $\ell$.

	We now prove \eqref{it:field_extension_over_small}. Over $\ell$, $\B{\star}{A\otimes_{\ell}k}{}$ and $\B{\star}{A}{}\otimes_{\ell}\B{\star}{k}{}$ are both projective resolutions of $A\otimes_{\ell}k$ as a bimodule over itself. Therefore, they are homotopy equivalent. An explicit homotopy equivalence can be found in \cite[XI.6, $g$ on p.~218--219]{cartan_eilenberg_1956_homological_algebra}. We can therefore use both resolutions to compute $\HH{\star}{A\otimes_{\ell}k,M\otimes_{\ell}k}{\ell}$. Since $A$ and $M$ are finite-dimensional over $\ell$,
	\begin{align*}
		\hom_{(A\otimes_{\ell}k)^e}(\B{\star}{A}{}\otimes_{\ell}\B{\star}{k}{},M\otimes_{\ell}k)&\cong \hom_{A^e}(\B{\star}{A}{},M)\otimes_{\ell}\hom_{k^e}(\B{\star}{k}{},k)\\
		&=\HC{\star}{A,M}{\ell}\otimes_{\ell}\HC{\star}{k,k}{\ell}.
	\end{align*}
	Bidegree-wise, this isomorphism is given by
	\begin{align*}
		\hom_{\ell}(A^{\otimes_{\ell} p}\otimes_{\ell} k^{\otimes_{\ell} q},M\otimes_{\ell} k)&\cong \hom_{\ell}(A^{\otimes_{\ell} p},M)\otimes_{\ell} \hom_{\ell}(k^{\otimes_{\ell} q},k).
	\end{align*}
	For later use, we record here that a direct homotopy equivalence
	\[\HC{\star}{A,M}{\ell}\otimes_{\ell}\HC{\star}{k,k}{\ell}\longrightarrow
		\HC{\star}{A\otimes_{\ell}k,M\otimes_{\ell}k}{\ell}\]
	is given by the maps
	\[\rho_{p,q}\colon\HC{p}{A,M}{\ell}\otimes_{\ell}\HC{q}{k,k}{\ell}\longrightarrow
		\HC{p+q}{A\otimes_{\ell}k,M\otimes_{\ell}k}{\ell}\]
	defined by
	\begin{multline*}\rho_{p,q}(c_p,c_q)(a_1\otimes x_1,\dots,a_{p+q}\otimes x_{p+q})=\\
	c_p(a_1,\dots,a_p)a_{p+1}\cdots a_{p+q}\otimes x_1\cdots x_pc_q(x_{p+1},\dots,x_{p+q}).\end{multline*}

	The final claim is clear from the preceding computations.
\end{proof}

\begin{proposition}\label{prop:HH_edge_vanishing}
	Let $\ell\subset k$ be a field extension, $A$ an $\ell$-algebra, $M$ an $A$-bimodule and $N$ an $A$-module. Suppose that $\dim_{\ell}A,\dim_{\ell}M,\dim_{\ell}N<\infty$. Then, the edge morphism in \Cref{def:edge_HH}
	\[\edge{N\otimes_{\ell}k}\colon\HH{\star}{A \otimes_{\ell}k,M \otimes_{\ell}k}{\ell}\longrightarrow\ext_{A \otimes_{\ell}k}^{\star}(N \otimes_{\ell}k,(N \otimes_{\ell}k)\otimes_{A \otimes_{\ell}k }(M \otimes_{\ell}k))\]
vanishes on the second factor of the direct sum decomposition
	\[\HH{\star}{A\otimes_{\ell}k,M\otimes_{\ell}k}{\ell}\cong\HH{\star}{A,M}{\ell}\otimes_{\ell}k\oplus \HH{\star}{A,M}{\ell}\otimes_{\ell}\HH{>0}{k,k}{\ell}\]
	derived from \Cref{prop:HH_field_extensions} \eqref{it:field_extension_over_small}.
\end{proposition}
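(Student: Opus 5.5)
The approach is to compute $\edge{N\otimes_{\ell}k}$ on cocycles in the image of the homotopy equivalence $\rho$ from the proof of \Cref{prop:HH_field_extensions}. These cocycles represent all classes, and the second summand is spanned by classes $[\rho_{p,q}(c_p\otimes c_q)]$ with $q>0$. We may use the bar resolution model for Hochschild cohomology and the fact that the edge morphism is well defined, independent of choices.

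First, we transport the edge morphism from $\proj{A\otimes_\ell k}$ to the algebra itself. By Morita invariance (\Cref{rem:HH_Morita_and_cocycle}), a class in $\HH{n}{A\otimes_\ell k,M\otimes_\ell k}{\ell}$ given by an algebra cocycle $c$ extends to $\proj{A\otimes_\ell k}$ by matrix-wise evaluation. Concretely, on composable matrices $(f_1,\dots,f_n)$ with entries in $A\otimes_\ell k$ between free modules, $c(f_1,\dots,f_n)$ is the matrix obtained by summing $c$ over products of entries. Next we choose the projective resolution of $N\otimes_\ell k$ as $P_\bullet\otimes_\ell k$, where $P_\bullet$ is a resolution of $N$ by finitely generated free $A$-modules. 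This is possible because $A$ is finite-dimensional over $\ell$. The differentials $f_i\otimes 1$ are then matrices whose entries are of the form $a\otimes 1$ with $a\in A$.

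The key step is to evaluate $\rho_{p,q}(c_p\otimes c_q)$ on such entries. By the formula in the proof of \Cref{prop:HH_field_extensions}, the value on $(a_1\otimes 1,\dots,a_{p+q}\otimes 1)$ is $c_p(a_1,\dots,a_p)a_{p+1}\cdots a_{p+q}\otimes c_q(1,\dots,1)$. For $q>0$ we may take $c_q$ normalized, since the normalized complex $\nHC{\star}{k,k}$ computes the same cohomology. Then $c_q(1,\dots,1)=0$. Multilinearity passes this vanishing to all matrix entries. Hence $c(f_1\otimes1,\dots,f_n\otimes1)=0$ and the representative of $\edge{N\otimes_\ell k}$ is the zero map, so the edge morphism vanishes on the second summand.

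I expect the main obstacle to be justifying that the cocycle on $\proj{A\otimes_\ell k}$ may be taken to be the Morita extension of $\rho_{p,q}(c_p\otimes c_q)$ with normalized $c_q$. Two points need care here. The first is that the Morita isomorphism of \Cref{rem:HH_Morita_and_cocycle} is realized on cochains by this matrix extension; this holds because it is inverse to the restriction along $A\otimes_\ell k\hookrightarrow\proj{A\otimes_\ell k}$, up to the standard choice of bases. The second is that $\rho$ sends normalized cochains to cocycles, which is clear since $\rho$ is a chain map. I would check these points directly before concluding.
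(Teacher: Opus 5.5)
Your proposal is correct and follows the same route as the paper: resolve $N\otimes_\ell k$ by $P_\bullet\otimes_\ell k$, represent classes in the second summand by cocycles $\rho_{p,q}(c_p,c_q)$ with $q>0$ and $c_q$ normalized, and observe that evaluating on the arguments $f_i\otimes 1$ produces the factor $c_q(1,\dots,1)=0$. Your explicit matrix-wise (Morita) extension of the algebra cocycle to the free modules in the resolution is a step the paper leaves implicit, and spelling it out is harmless.
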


\begin{proof}
	Choose a projective resolution of $N$ in $\mod{A}$,
	\[\cdots\to P_{i+1}\xrightarrow{f_{i+1}}P_{i}\xrightarrow{f_{i}}P_{i-1}\to\cdots\to P_0\stackrel{\pi}{\twoheadrightarrow} N.\]
	Then
	\[\cdots\to P_{i+1} \otimes_{\ell}k\xrightarrow{f_{i+1}\otimes 1}P_{i} \otimes_{\ell}k\xrightarrow{f_{i} \otimes 1}P_{i-1} \otimes_{\ell}k\to\cdots\to P_0 \otimes_{\ell}k\stackrel{\pi \otimes 1}{\twoheadrightarrow} N \otimes_{\ell}k\]
	is a projective resolution of $N \otimes_{\ell}k $ in $\mod{A \otimes_{\ell}k}$.

	By the proof of \Cref{prop:HH_field_extensions}, any element in $\HH{\star}{A\otimes_{\ell}k,M\otimes_{\ell}k}{\ell}$ coming from the direct factor $\HH{\star}{A,M}{\ell}\otimes_{\ell}\HH{>0}{k,k}{\ell}$ is a linear combination of classes represented by cocycles of the form $\rho_{p,q}(c_p,c_q)$ for certain cocycles $c_p\in\HC{p}{A,M}{\ell}$, $c_q\in\HC{q}{k,k}{\ell}$ and $q>0$. We can assume without loss of generality that $c_q$ is normalized with respect to identities. Therefore,
	\[\rho_{p,q}(c_p,c_q)(f_1\otimes 1,\dots,f_{p+q}\otimes 1)
	=c_p(f_1,\dots,f_p)f_{p+1}\cdots f_{p+q}\otimes c_q(1,\dots 1)=0.\]
	Here we use that $q>0$, so there is at least one $1$ among the arguments of $c_q$ and $c_q(1,\dots,1)=0$ by normalization. This proves that $\edge{N\otimes_{\ell}k}([\rho_{p,q}(c_p,c_q)])=0$.
\end{proof}

The following characterization of the MacLane cohomology is well known, see \cite[Proposition 3.12]{jibladze_pirashvili_1991_cohomology_algebraic_theories} and \cite[Theorem 1.4]{pirashvili_waldhausen_1992_mac_lane_homology}.

\begin{definition}
	Let $A$ be a ring and $M$ an $A$-bimodule. The \emph{MacLane cohomology} of $A$ with coefficients in $M$ is defined as
	\[\HML{\star}{A,M}=\HH{\star}{\Z\proj{A},\hom_A(-,-\otimes_AM)}{}.\]
	If $A$ is an algebra over a commutative ring $k$, we can replace $\Z$ with $k$ obtaining the same result.
\end{definition}

The comparison morphism \eqref{eq:comparison} for the category $\A=\proj{A}$ and its bimodule $\hom_A(-,-\otimes_AM)$ is of the form
\begin{equation}\label{eq:comparison_HH_HML}
	\HH{\star}{A,M}{}\longrightarrow\HML{\star}{A,M}.
\end{equation}

\begin{proposition}\label{prop:comparison_SH_HML_char_0}
	If $A$ is a $\Q$-algebra and $M$ is an $A$-bimodule, the comparison morphism $\HH{\star}{A,M}{}\to\HML{\star}{A,M}$ in \eqref{eq:comparison_HH_HML} is an isomorphism.
\end{proposition}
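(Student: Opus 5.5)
The plan is to identify both sides of \eqref{eq:comparison_HH_HML} as $\ext$ groups in two functor categories, and then to show that over $\Q$ the non-additive part of the larger category contributes nothing. By the definition of MacLane cohomology, and the remark that the ground ring $\Z$ may be replaced by any commutative ring over which $A$ is an algebra, we may work with $\Q\proj{A}$ instead of $\Z\proj{A}$. The comparison map is then induced by the counit $\Q\proj{A}\to\proj{A}$. Write $I\colon\proj{A}\to\mod{A}$ for the inclusion and $T=-\otimes_AM$. Standard Hochschild--Mitchell theory, using the (normalized) bar resolution as a resolution of the diagonal bimodule, gives
\[\HH{\star}{\proj{A},\hom_A(-,-\otimes_AM)}{}\cong\ext^{\star}_{\mathcal{F}_{\mathrm{add}}}(I,T),\qquad \HH{\star}{\Q\proj{A},\hom_A(-,-\otimes_AM)}{}\cong\ext^{\star}_{\mathcal{F}}(I,T).\]
Here $\mathcal{F}_{\mathrm{add}}$ is the category of additive ($\Q$-linear) functors $\proj{A}\to\mod{A}$, and $\mathcal{F}$ is the category of all functors of the underlying categories. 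Under these identifications the comparison morphism is the map induced by the exact inclusion $\mathcal{F}_{\mathrm{add}}\subset\mathcal{F}$. By Morita invariance (\Cref{rem:HH_Morita_and_cocycle}), the left-hand group is $\HH{\star}{A,M}{}$. The statement therefore reduces to showing that this inclusion induces isomorphisms on $\ext^{\star}(I,T)$.

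To prove this I will use the standard weight argument. For each integer $n\neq0$, multiplication by $n$ is a natural endomorphism of every object of $\proj{A}$. It induces an action of the multiplicative monoid $\Z\setminus\{0\}$ on every functor in $\mathcal{F}$. I will resolve $I$ in $\mathcal{F}$ by the standard projectives $\Q[\hom_A(P,-)]\otimes V$. These are filtered by the powers of the augmentation ideal, and over $\Q$ the filtration splits. The associated graded pieces are polynomial functors of homogeneous degree $d$, on which multiplication by $n$ acts by $n^d$: they are symmetric powers $\operatorname{Sym}^d$ of the additive representable functors, twisted by coefficients. The degree $0$ piece is constant, and the degree $1$ piece is additive and gives the additive projective resolution of $I$. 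Since $T$ is additive, multiplication by $n$ acts on $T$ by $n$. It acts on each homogeneous degree $d$ piece by $n^d$, and it acts on $\ext_{\mathcal{F}}$ compatibly through both arguments. Taking $n=2$, the maps on $\ext^{\star}_{\mathcal{F}}(F_d,T)$ induced through the two arguments are multiplication by $2^d$ and by $2$, and these must coincide. When $d\neq1$ the difference $2^d-2$ is invertible in $\Q$, so these groups vanish. The constant part $d=0$ also vanishes directly, because it is reduced away against the additive target. Consequently the weight-$1$ summand of the resolution computes $\ext^{\star}_{\mathcal{F}}(I,T)$, and that summand is exactly the additive resolution. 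This yields the isomorphism.

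I expect the main obstacle to be making the weight decomposition rigorous, since the augmentation-ideal filtration of $\Q[\hom_A(P,-)]$ is not finite. Degreewise the functors are not polynomial of bounded degree, so one cannot simply split them into finitely many eigenspaces. My first attempt would work with the completion, or use the Dold--Kan/cross-effect description: a functor to $\Q$-vector spaces decomposes via cross-effects, and $\ext$ from a degree-$d$ cross-effect into an additive functor vanishes for $d\ge2$ by the same eigenvalue argument. This is essentially Pirashvili's comparison between MacLane cohomology and Hochschild cohomology in characteristic zero. An alternative route I would keep in reserve is the known computation $\HML{\star}{A,M}\cong\HH{\star}{A,M\otimes\HML{\star}{\Z,\Z}}$ together with $\HML{>0}{\Z,\Z}\otimes\Q=0$.
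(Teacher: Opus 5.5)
Your reduction to the map $\ext^{\star}_{\mathcal{F}_{\mathrm{add}}}(I,T)\to\ext^{\star}_{\mathcal{F}}(I,T)$ is sound; it is the Jibladze--Pirashvili description of MacLane cohomology. It is also true that multiplication by $n$ acts the same way on $\ext_{\mathcal{F}}$ through either variable. The gap is the step you flag yourself, and it is where all the content lies.

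The projectives $\Q[\hom_A(P,-)]$ do not split into weights. Write $J$ for the augmentation ideal. Only the truncations split, $\Q[W]/J^{d+1}\cong\bigoplus_{e\leq d}\operatorname{Sym}^eW$. The functor $\Q[-]$ itself is generated by a single element but has infinite degree, so it is not a direct sum of homogeneous polynomial functors. Hence there is no ``weight-$1$ summand'' of the resolution.

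What the eigenvalue argument actually proves is that $\ext^{\star}_{\mathcal{F}}(F,T)=0$ when $F$ has \emph{finite} degree and no weight-$1$ part. Take $A=\Q$. The cover $\Q[-]\to I$, $[x]\mapsto x$, has kernel $\Q\cdot[0]\oplus J^2$. Since $\hom_{\mathcal F}(J^2,T)=0$, the proposition for $A=\Q$ is then exactly the statement $\ext^{\star}_{\mathcal{F}}(J^2,T)=0$. Finite-degree vanishing only gives $\ext^{\star}_{\mathcal{F}}(J^2,T)\cong\ext^{\star}_{\mathcal{F}}(J^d,T)$ for every $d\geq 2$, not that these groups vanish.

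Your proposed fixes do not close this. The completion $\prod_d\operatorname{Sym}^d$ is not projective, and $\ext$ out of an inverse limit is not computed termwise. Cross-effect decompositions exist only in finite degree. You would need something like a connectivity bound on $\ext^{\star}_{\mathcal{F}}(J^d,T)$ that grows with $d$. That is a genuine computation, not a formality: for $A=\Q$ the missing step is the whole proposition, $\HML{\star}{\Q,M}=M$ in degree $0$. Equivalently, $\HMLhomology{>0}{\Z,\Z}\otimes\Q=0$.

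This is exactly the input the paper imports. It uses the Pirashvili--Waldhausen spectral sequence $E_2^{p,q}=\operatorname{HH}^p(A,\HML{q}{\Z,M})\Rightarrow\HML{p+q}{A,M}$, whose edge morphism is the comparison map. So it suffices that $\HML{\star}{\Z,M}=M$, concentrated in degree $0$. That follows from three facts:
\begin{itemize}
\item the universal coefficient spectral sequence $\ext^p_{\Z}(\HMLhomology{q}{\Z,\Z},M)\Rightarrow\HML{p+q}{\Z,M}$;
\item B\"okstedt's result that $\HMLhomology{\star}{\Z,\Z}$ is $\Z$ in degree $0$ and torsion above;
\item $M$, being a $\Q$-vector space, is torsion-free and injective as an abelian group.
\end{itemize}
Your ``reserve'' route is this argument, but as written it is inaccurate. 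There is no isomorphism $\HML{\star}{A,M}\cong\operatorname{HH}^{\star}(A,M\otimes\HML{\star}{\Z,\Z})$, only the spectral sequence above. The fact to feed in is the torsion statement for MacLane \emph{homology} of $\Z$, passed to rational coefficients $M$ via universal coefficients, rather than $\HML{>0}{\Z,\Z}\otimes\Q=0$.
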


\begin{proof}
	The cohomological version of the spectral sequence in \cite[Theorem 4.1]{pirashvili_waldhausen_1992_mac_lane_homology} looks like
	\[E_2^{p,q}=\HH{p}{A,\HML{q}{\Z,M}}{}\Longrightarrow\HML{p+q}{A,M},\]
	compare \cite[9.2.1 Theorem]{baues_pirashvili_2006_comparison_mac_lane}.
	The comparison morphism in the statement is the edge morphism of this spectral sequence. It therefore suffices to show that $\HML{\star}{\Z,M}=M$ concentrated in degree $0$. In order to prove this, we use the universal coefficient spectral sequence for the computation of MacLane cohomology from homology,
	\[E^{p,q}_2=\ext^{p}_{\Z}(\HMLhomology{q}{\Z,\Z},M)\Longrightarrow\HML{p+q}{\Z,M}.\]
	This spectral sequence is constructed in \cite[Proposition 8.2]{muro_2020_first_obstructions_enhancing}. The MacLane homology $\HMLhomology{\star}{\Z,\Z}$ is well known to be $\Z$ in degree $0$ and torsion in higher degrees \cite{bokstedt_1985_topological_hochschild_homology}. Therefore $\HML{\star}{\Z,M}=M$ concentrated in degree $0$, since $M$ is torsion-free and injective as an abelian group.
\end{proof}

\begin{definition}\label{def:MacLane_cohomology_class}
	Let $p>0$ be a prime and $A$ a $\Z/(p^2)$-algebra which is free as a $\Z/(p^2)$-module. We define a MacLane cohomology class
	\[[\xi_A]\in\HML{2}{A/(p),A/(p)}=\HH{2}{\Fp{p}\proj{A/(p)},\hom_{A/(p)}}{}\]
	in the following way. The functor
	\[-\otimes_AA/(p)\colon\proj{A}\longrightarrow\proj{A/(p)},\qquad P \otimes_AA/(p)=P/(p),\] is full, conservative and essentially surjective. Each $P$ in $\proj{A}$ is $\Z/(p^2)$-free and fits into a natural short exact sequence of $A$-modules
	\begin{equation}\label{eq:natural_short_exact_sequence}
		P/(p)\stackrel{j_P}\hookrightarrow P \stackrel{q_P}\twoheadrightarrow P/(p)
	\end{equation}
	where $q_P$ is the canonical projection and $j_P([x])=px$.
	For any pair of objects $P,Q$ in $\proj{A}$ we have a short exact sequence
	\[
	\begin{tikzcd}
	\hom_{A/(p)}(P/(p),Q/(p))\ar[r, hook]&
		\hom_{A}(P,Q)\ar[two heads, rr, "-\otimes_AA/(p)"]&&	\hom_{A/(p)}(P/(p),Q/(p))
		\ar[ll, bend left, hook', dashed, "s_{P,Q}"]
	\end{tikzcd}
	\]
	The first arrow is defined by $f\mapsto j_Qfq_P$. We choose a splitting map of sets $s_{P,Q}$. A representing cocycle $\xi_A$ is defined as follows in terms of these splittings. Given $P_0,P_1,P_2$ in $\proj{A}$ and
	\[P_2/(p)\xrightarrow{f_2} P_1/(p)\xrightarrow{f_1}P_0/(p)\]
	in $\proj{A/(p)}$,
	\[s_{P_2,P_0}(f_1f_2)-s_{P_1,P_0}(f_1) s_{P_2,P_1}(f_2)=j_{P_0}\xi_A(f_1,f_2)q_{P_2}.\]
	We will assume without loss of generality that the splittings $s_{P,Q}$ are pointed, i.e.~$s_{P,Q}(0)=0$ for all $P,Q$ in $\proj{A}$. Hence $\xi_A(f_1,f_2)=0$ if $f_1=0$ or $f_2=0$, i.e.~$\xi_A$ is \emph{zero-normalized} in the sense of \cite[(1.6) Example]{baues_dreckmann_1989_cohomology_homotopy_categories}.
\end{definition}

\begin{proposition}\label{prop:direct_sum_HML}
	Let $p>0$ be a prime, $A$ a $\Z/(p^2)$-algebra which is free as a $\Z/(p^2)$-module and $M$ an $A/(p)$-bimodule. We have a direct sum decomposition
	\[\HML{3}{A/(p),M}=\HH{3}{A/(p),M}{}\oplus \HML{1}{A/(p),M}{}\]
	defined by the morphisms
	\[\HH{3}{A/(p),M}{}\xrightarrow{\eqref{eq:comparison_HH_HML}}\HML{3}{A/(p),M}\xleftarrow{-\smile[\xi_A]}\HML{1}{A/(p),M}.\]
\end{proposition}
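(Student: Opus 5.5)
The plan is to compute $\HML{3}{A/(p),M}$ with the Pirashvili--Waldhausen spectral sequence already used in the proof of \Cref{prop:comparison_SH_HML_char_0}. For the $\Fp{p}$-algebra $A/(p)$ it reads
\[E_2^{s,t}=\HH{s}{A/(p),\HML{t}{\Fp{p},M}}{}\Longrightarrow\HML{s+t}{A/(p),M},\]
and its edge morphism along the bottom row $t=0$ is the comparison morphism \eqref{eq:comparison_HH_HML}. I will use the classical computation of Franjou--Lannes--Schwartz, $\HML{\star}{\Fp{p},\Fp{p}}=\Fp{p}[e]$ with $|e|=2$. This gives $\HML{t}{\Fp{p},M}=M$ for even $t\geq0$ and $0$ for odd $t$.

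Hence in total degrees $\leq 4$ only the rows $t=0$ and $t=2$ matter. All $d_2$ vanish because the odd rows are zero. In total degree $3$ there are exactly two $E_2$-terms: $E_2^{3,0}=\HH{3}{A/(p),M}{}$ and $E_2^{1,2}=\HH{1}{A/(p),M}{}$. The only differentials that can interfere are
\[d_3\colon E_3^{0,2}\to E_3^{3,0}\qquad\text{and}\qquad d_3\colon E_3^{1,2}\to E_3^{4,0}.\]
In low degree the comparison morphism $\HH{1}{A/(p),M}{}\to\HML{1}{A/(p),M}$ is an isomorphism, since nothing else contributes in total degree $1$. So the source of $-\smile[\xi_A]$ really is $\HH{1}{A/(p),M}{}$.

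The key step is to show that $[\xi_A]$ lies in filtration $F^0\HML{2}{A/(p),A/(p)}$ and projects in $E_\infty^{0,2}=\HH{0}{A/(p),\HML{2}{\Fp{p},A/(p)}}{}$ to the generator $e$, the central unit class. I would do this by naturality. Since $A$ is $\Z/(p^2)$-free, the construction of \Cref{def:MacLane_cohomology_class} is compatible with the unit $\Z/(p^2)\to A$. Restricting $\xi_A$ to free modules over the image of $\Fp{p}$ therefore recovers $\xi_{\Z/(p^2)}$. This is the cocycle classifying the linear extension $\proj{\Z/(p^2)}\to\proj{\Fp{p}}$, and by Jibladze--Pirashvili (MacLane $\HML{2}{}$ classifies such linear extensions) it generates $\HML{2}{\Fp{p},\Fp{p}}\cong\Fp{p}$. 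I expect this identification, including pinning down the edge map from filtration $0$ to the fibre $\HML{2}{\Fp{p},-}$, to be the main obstacle.

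Granting it, I would use the multiplicative structure. The spectral sequence for $M$ is a module over the one for $A/(p)$, and cup product with the permanent cycle $\xi_A$ induces maps $E_r^{s,0}\to E_r^{s,2}$ that commute with differentials. On $E_2$ these maps are the isomorphisms $\HH{s}{A/(p),M}{}\cong\HH{s}{A/(p),M\cdot e}{}$. The bottom row consists of permanent cycles, since no differentials leave it. It follows that every element of row $2$ is a product of a permanent cycle with $\xi_A$, so all differentials out of row $2$ in this range vanish. In particular both $d_3$'s above are zero. Therefore $E_\infty^{3,0}=\HH{3}{A/(p),M}{}$, which means the comparison morphism is injective in degree $3$, and $E_\infty^{1,2}=\HH{1}{A/(p),M}{}$. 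This gives a short exact sequence
\[\HH{3}{A/(p),M}{}\hookrightarrow\HML{3}{A/(p),M}\twoheadrightarrow\HH{1}{A/(p),M}{}.\]
The map $-\smile[\xi_A]$ from $\HML{1}{A/(p),M}=\HH{1}{A/(p),M}{}$ lands in filtration $F^1$ and induces the identity on $E_\infty^{1,2}$, so it splits this sequence. This yields the stated direct sum decomposition.
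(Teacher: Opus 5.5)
Your route is genuinely different from the paper's, and your spectral-sequence bookkeeping is correct. Only the rows $t=0,2$ matter, $\HML{1}{A/(p),M}=\HH{1}{A/(p),M}{}$, and everything hinges on three things: the differentials $d_3\colon E_3^{0,2}\to E_3^{3,0}$ and $d_3\colon E_3^{1,2}\to E_3^{4,0}$, and the composite $\HML{1}{A/(p),M}\xrightarrow{-\smile[\xi_A]}\HML{3}{A/(p),M}\twoheadrightarrow E_\infty^{1,2}$.

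\textbf{Gap.} The decisive input is assumed rather than proved. You flag one identification yourself: that the edge map onto $E_\infty^{0,2}$ is restriction along $\proj{\Fp{p}}\to\proj{A/(p)}$, so that $[\xi_A]\mapsto 1\cdot e$. But you also silently take for granted that the Pirashvili--Waldhausen spectral sequence with coefficients in $M$ is a module over the one with coefficients in $A/(p)$. That means a pairing satisfying the Leibniz rule, with the cup product on MacLane cohomology preserving the filtration and inducing on $E_\infty$ the product computed on $E_2$ by the Hochschild cup product and the coefficient pairing. Nothing you cite provides this, and it is not a formality. It is exactly what you use twice:
\begin{itemize}
\item to kill $d_3$ on $E_3^{1,2}$, which gives surjectivity onto $\HH{1}{A/(p),M}{}$;
\item to see that $-\smile[\xi_A]$ induces the identity on $E_\infty^{1,2}$, which gives the splitting.
\end{itemize}
Without it, and granting your identification of $[\xi_A]$, you only get that $d_3$ vanishes on $E_3^{0,2}$. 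This follows by naturality in the coefficients, since each $x\in\HH{0}{A/(p),M}{}$ is the image of $1$ under the bimodule map $A/(p)\to M$, $a\mapsto xa$. The consequence is injectivity of the comparison map in degree $3$, which is the part the paper simply cites from Baues--Pirashvili (their Theorem 7.3.1, using that $A$ is a weak lifting).

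\textbf{How the paper avoids this.} The paper needs no multiplicative structure on any spectral sequence. It uses Baues's operator $\Gamma_p\colon\HML{3}{A/(p),M}\to\HML{1}{A/(p),M}$. It quotes Baues--Pirashvili for exactness of $\HH{3}{A/(p),M}{}\to\HML{3}{A/(p),M}\xrightarrow{\Gamma_p}\HML{1}{A/(p),M}$ and for injectivity of the first map. It then proves the splitting by a direct cochain computation. Taking $c_1$ normalized with respect to identities and respecting sum diagrams (Baues--Tonks), two of the three terms of $\Gamma_p(c_1\smile\xi_A)$ vanish. What remains is $\Gamma_p(c_1\smile\xi_A)(f_1)=c_1(f_1)\,\xi_A\bigl((1\cdots1),(1\cdots1)^{T}\bigr)=-c_1(f_1)$. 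The last factor is computed by choosing the splittings $s$ to send $0/1$-matrices to themselves.

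That explicit evaluation of $\xi_A$ on the pair $\bigl((1\cdots1),(1\cdots1)^{T}\bigr)$ replaces both your identification of $[\xi_A]$ with the generator $e$ and your module structure. To complete your version, you would have to construct the pairing of spectral sequences, or compute the composite into $E_\infty^{1,2}$ some other way. Either is more work than the paper's direct check.
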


\begin{proof}
	\citeauthor{baues_2006_algebra_secondary_cohomology} constructed in \cite[4.6.1 Definition and 4.6.3 Proposition]{baues_2006_algebra_secondary_cohomology} a morphism $\Gamma_p$ such that the composite of the two maps in the sequence
	\begin{equation}\label{eq:exact_sequence_HML}
		\HH{3}{A/(p),M}{}\xrightarrow{\eqref{eq:comparison_HH_HML}}\HML{3}{A/(p),M}\xrightarrow{\Gamma_p}\HML{1}{A/(p),M}{}
	\end{equation}
	is zero.

	\citeauthor{baues_pirashvili_2006_comparison_mac_lane} proved in \cite[7.4.1.~Theorem]{baues_pirashvili_2006_comparison_mac_lane} that \eqref{eq:exact_sequence_HML} is exact, after identifying MacLane cohomology with Shukla cohomology in degrees $2$ and $3$, see \cite[\S9.1 (10) and 9.1.1 Proposition]{baues_pirashvili_2006_comparison_mac_lane}. Here we use that $A/(p)$ is an $\Fp{p}$-algebra.
	Moreover, the first map in \eqref{eq:exact_sequence_HML} is injective by \cite[7.3.1 Theorem]{baues_pirashvili_2006_comparison_mac_lane} since $A$ is a weak lifting of $A/(p)$ in the sense of \cite[\S7.2]{baues_pirashvili_2006_comparison_mac_lane}. Therefore, it suffices to check that the map
	\[\HML{1}{A/(p),M}\xrightarrow{-\smile[\xi_A]}\HML{3}{A/(p),M}\]
	splits $\Gamma_p$ up to sign.

	At the cochain level,
	\begin{align*}
		\Gamma_p(c_3)(x_0\xleftarrow{f_1}x_1)={}&c_3(x_0\xleftarrow{f_1}x_1\xleftarrow{(1\dots1)}x_1\oplus\stackrel{p}{\cdots}\oplus x_1\xleftarrow{\begin{psmallmatrix*}[c]
			1\\\svdots\\1
		\end{psmallmatrix*}}x_1)\\
		&-c_3(x_0\xleftarrow{(1\dots1)}x_0\oplus\stackrel{p}{\cdots}\oplus x_0\xleftarrow{f_1\oplus\stackrel{p}\cdots\oplus f_1}x_1\oplus\stackrel{p}{\cdots}\oplus x_1\xleftarrow{\begin{psmallmatrix*}[c]
			1\\\svdots\\1
		\end{psmallmatrix*}}x_1)\\
		&+c_3(x_0\xleftarrow{(1\dots1)}x_0\oplus\stackrel{p}{\cdots}\oplus x_0\xleftarrow{\begin{psmallmatrix*}[c]
			1\\\svdots\\1
		\end{psmallmatrix*}}x_0\xleftarrow{f_1}x_1).
	\end{align*}
	Therefore,
	\begin{multline*}
		\Gamma_p(c_1\smile\xi_A)(x_0\xleftarrow{f_1}x_1)=c_1(x_0\xleftarrow{f_1}x_1)\xi_A(x_1\xleftarrow{(1\dots1)}x_1\oplus\stackrel{p}{\cdots}\oplus x_1\xleftarrow{\begin{psmallmatrix*}[c]
			1\\\svdots\\1
		\end{psmallmatrix*}}x_1)\\
		-c_1(x_0\xleftarrow{(1\dots1)}x_0\oplus\stackrel{p}{\cdots}\oplus x_0)\xi_A(x_0\oplus\stackrel{p}{\cdots}\oplus x_0\xleftarrow{f_1\oplus\stackrel{p}\cdots\oplus f_1}x_1\oplus\stackrel{p}{\cdots}\oplus x_1\xleftarrow{\begin{psmallmatrix*}[c]
			1\\\svdots\\1
		\end{psmallmatrix*}}x_1)\\
		+c_1(x_0\xleftarrow{(1\dots1)}x_0\oplus\stackrel{p}{\cdots}\oplus x_0)\xi_A(x_0\oplus\stackrel{p}{\cdots}\oplus x_0\xleftarrow{\begin{psmallmatrix*}[c]
			1\\\svdots\\1
		\end{psmallmatrix*}}x_0\xleftarrow{f_1}x_1).
	\end{multline*}
	We can suppose that $c_1$ is normalized with respect to identities in the sense of \Cref{def:HH}, i.e.~$c_1(1)=0$. Moreover, we can \emph{also} suppose that it respects sum diagrams in the sense of \cite[Definition A.3]{baues_tonks_1996_sumnormalised_cohomology_categories} by \cite[Theorem A.9]{baues_tonks_1996_sumnormalised_cohomology_categories} (the cochain $c'=c+\delta\gamma_c$ in the proof of that result, which respects sum diagrams, is normalized with respect to identities if $c$ is). Therefore,
	\[\begin{array}{ccccc}
		&&c_1(x_0\xleftarrow{(1\dots1)}x_0^p)&\in&\hom_A(x_0^p,x_0\otimes_{A/(p)}M)\\
		&&\|&&\|\\
		(0,\dots,0)&=&(c_1(1),\dots,c_1(1))&\in&\hom_A(x_0,x_0\otimes_{A/(p)}M)^p
	\end{array}\]
	and hence
	\[\Gamma_p(c_1\smile\xi_A)(x_0\xleftarrow{f_1}x_1)=c_1(x_0\xleftarrow{f_1}x_1)\xi_A(x_1\xleftarrow{(1\dots1)}x_1\oplus\stackrel{p}{\cdots}\oplus x_1\xleftarrow{\begin{psmallmatrix*}[c]
			1\\\svdots\\1
		\end{psmallmatrix*}}x_1).\]
	To conclude, we show that the factor on the right of the product is $-1\colon x_1\to x_1$.

	The object $x_1$ belongs to $\proj{A/(p)}$ so $x_1=P/(p)$ for some $P$ in $\proj{A}$. We can suppose that the splitting maps $s$ in \Cref{def:MacLane_cohomology_class} send any matrix of $0$'s and $1$'s to the same matrix, where $1$ in the source (resp.~target) of $s$ denotes an identity morphism in $\proj{A/(p)}$ (resp.~$\proj{A}$).
	Hence,
	\begin{multline*}
		s_{P,P}(\underbrace{x_1\xleftarrow{(1\dots1)}x_1\oplus\stackrel{p}{\cdots}\oplus x_1\xleftarrow{\begin{psmallmatrix*}[c]
			1\\\svdots\\1
		\end{psmallmatrix*}}x_1}_{p\cdot 1=0\colon x_1\to x_1})\\
		-
		s_{P\oplus\stackrel{p}{\cdots}\oplus P,P}(x_1\xleftarrow{(1\dots1)}x_1\oplus\stackrel{p}{\cdots}\oplus x_1)s_{P,P\oplus\stackrel{p}{\cdots}\oplus P}(x_1\oplus\stackrel{p}{\cdots}\oplus x_1\xleftarrow{\begin{psmallmatrix*}[c]
			1\\\svdots\\1
		\end{psmallmatrix*}}x_1)\\
		=(P\xleftarrow{0}P)-(P\xleftarrow{(1\dots1)}P\oplus\stackrel{p}{\cdots}\oplus P\xleftarrow{\begin{psmallmatrix*}[c]
			1\\\svdots\\1
		\end{psmallmatrix*}}P)\\
		=-(P\xleftarrow{p\cdot 1}P)=P\xleftarrow{j_P}x_1\xleftarrow{-1}x_1\xleftarrow{q_P}P.
	\end{multline*}
	This means that
	\[\xi_A(x_1\xleftarrow{(1\dots1)}x_1\oplus\stackrel{p}{\cdots}\oplus x_1\xleftarrow{\begin{psmallmatrix*}[c]
			1\\\svdots\\1
		\end{psmallmatrix*}}x_1)=x_1\xleftarrow{-1}x_1.\]
\end{proof}

\begin{definition}\label{def:edge_HML}
	Let $A$ be a coherent $k$-algebra.
	Given an $A$-bimodule $M$ and a finitely presented $A$-module $N$, the \emph{edge morphism}
	\[\edge{N}\colon\HML{n}{A,M}=\HH{n}{k\proj{A},\hom_A(-,-\otimes_AM)}{}\longrightarrow\ext_A^n(N,N\otimes_AM)\]
	is defined by exactly the same formula as in \Cref{def:edge_HH}. This makes sense because, in \Cref{def:edge_HH}, we did not use that the Hochschild cocycle $c$ is multilinear.
\end{definition}

\begin{proposition}
	Let $p>0$ be a prime, $A$ a $\Z/(p^2)$-algebra which is free as a $\Z/(p^2)$-module, $M$ an $A/(p)$-bimodule and $N$ an $A$-module which is free as a $\Z/(p^2)$-module. Then, the morphism in \Cref{def:edge_HML}
	\[\edge{N/(p)}\colon\HML{3}{A/(p),M}\longrightarrow\ext_{A/(p)}^3(N/(p),N/(p)\otimes_{A/(p)}M)\]
	vanishes on the second direct factor of the direct sum decomposition in \Cref{prop:direct_sum_HML}
	\[\HML{3}{A/(p),M}=\HH{3}{A/(p),M}{}\oplus \HML{1}{A/(p),M}{}.\]
\end{proposition}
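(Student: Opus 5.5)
The second direct factor consists of classes $[c_1\smile\xi_A]$ with $c_1$ a $1$-cocycle in $\HC{1}{\Fp{p}\proj{A/(p)},\hom_{A/(p)}(-,-\otimes_{A/(p)}M)}{}$. I would show directly that $\edge{N/(p)}([c_1\smile\xi_A])=0$ by computing on a carefully chosen projective resolution of $N/(p)$, namely the reduction mod $p$ of a resolution of $N$ over $A$.

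First, I would fix the resolution. Since $N$ is $\Z/(p^2)$-free, I would choose a projective resolution of $N$ over $A$:
\[\cdots\to P_2\xrightarrow{g_2}P_1\xrightarrow{g_1}P_0\xrightarrow{\pi}N.\]
All terms are $\Z/(p^2)$-free, so applying $-\otimes_AA/(p)$, i.e.\ $-\otimes_{\Z/(p^2)}\Fp{p}$, keeps it exact. This follows from $\operatorname{Tor}^{\Z/(p^2)}_{>0}(N,\Fp{p})=0$ together with the freeness of the kernels, which holds by induction since a kernel of a surjection between free $\Z/(p^2)$-modules is projective, hence free. The result is a projective resolution
\[\cdots\to P_2/(p)\xrightarrow{f_2}P_1/(p)\xrightarrow{f_1}P_0/(p)\xrightarrow{\bar\pi}N/(p),\]
with $f_i=g_i\otimes A/(p)$.

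Next, I would choose the set-theoretic splittings in \Cref{def:MacLane_cohomology_class} so that $s_{P_{i},P_{i-1}}(f_i)=g_i$ and $s(0)=0$. This is compatible with the earlier normalizations (pointed, and sending $0/1$ matrices to themselves), since these only constrain the splittings on different morphisms or on zero. This choice is harmless because the class $[\xi_A]$ does not depend on the splittings. Then
\[j_{P_1}\xi_A(f_1,f_2)q_{P_3}=s(f_1f_2)-s(f_1)s(f_2)=s(0)-g_1g_2=0.\]
The first term vanishes because $f_1f_2=0$ and $s$ is pointed; the second because $g_1g_2=0$. Since $j$ is injective and $q$ is surjective, $\xi_A(f_2,f_3)=0$ as well.

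Finally, I would apply the formula of \Cref{def:edge_HML} (equivalently \Cref{def:edge_HH}):
\[(c_1\smile\xi_A)(f_1,f_2,f_3)=c_1(f_1)\,\xi_A(f_2,f_3)=0.\]
Hence the representing map $P_3/(p)\to N/(p)\otimes M$ vanishes, and the edge morphism kills the second factor. Since the edge morphism is well defined on classes, independent of the cocycle and resolution chosen, this suffices. Here it is essential that the cup product formula only evaluates $\xi_A$ on the last two arguments $(f_2,f_3)$, and that the cocycle condition of $c_1$ plays no role.

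The main obstacle is the compatibility of choices. The edge morphism must be independent of the representing cocycle, so changing the splittings must change $c_1\smile\xi_A$ by a coboundary. This holds because $\xi_A$ changes by the coboundary of a $1$-cochain $\gamma$, and $c_1\smile\delta\gamma=\pm\delta(c_1\smile\gamma)$ since $c_1$ is a cocycle. The remark following \Cref{def:edge_HH} then gives well-definedness, including on non-multilinear cochains as noted in \Cref{def:edge_HML}. A secondary point is making sure that the splitting values are consistent when the same pair $(P_i,P_{i-1})$ appears elsewhere, which only requires $s$ to be defined pointwise on morphisms, so there is no conflict.
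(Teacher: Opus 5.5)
Your argument is correct, but it handles $\xi_A$ differently from the paper.

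\textbf{The paper's route.} It keeps one fixed family of pointed splittings for all $N$. It writes each $s_{P_{i+1},P_i}(f_{i+1})$ as the chosen lift plus a correction $j_{P_i}h_{i+1}q_{P_{i+1}}$, and finds $\xi_A(f_2,f_3)=h_2f_3+f_2h_3$. The first term factors through $f_3$. The second vanishes after composing with $(\pi\otimes_{A/(p)}M)c_1(f_1)$, by the cocycle condition of $c_1$ at $(f_1,f_2)$.

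\textbf{Your route.} You use that the second summand is the image of $-\smile[\xi_A]$, so it depends only on the class $[\xi_A]$. This class does not depend on the splittings: changing them changes $\xi_A$ by a coboundary $\delta\gamma$, hence $c_1\smile\xi_A$ by $\pm\delta(c_1\smile\gamma)$. Choosing splittings adapted to the resolution of $N$ then makes $\xi_A(f_2,f_3)=0$ on the nose, so the cocycle condition of $c_1$ is never needed.

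Your route is shorter. The paper's has the advantage that a single representative cocycle works for all $N$ at once, so no choices depend on $N$.

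Three points need tightening.
\begin{enumerate}
\item Prescribing $s$ ``pointwise'' is not automatically consistent. If, say, $P_3=P_2=P_1$ as objects and $f_3=f_2$ while $g_3\neq g_2$, you cannot set both $s(f_2)=g_2$ and $s(f_3)=g_3$. You can avoid this by choosing $P_1,P_2,P_3$ pairwise distinct as objects of $\proj{A}$. Then the three prescriptions $s_{P_3,P_2}(f_3)=g_3$, $s_{P_2,P_1}(f_2)=g_2$ and $s_{P_3,P_1}(0)=0$ live on different hom-sets.
\item Your claim that the adapted splittings are compatible with the normalization sending $0/1$ matrices to themselves is not right in general. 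A non-minimal resolution may contain such a matrix whose chosen lift is a different matrix. However, that normalization is unnecessary here: it is used only to prove \Cref{prop:direct_sum_HML}, and the decomposition depends only on $[\xi_A]$.
\item In your first display, $j_{P_1}\xi_A(f_1,f_2)q_{P_3}$ should read $j_{P_0}\xi_A(f_1,f_2)q_{P_2}$.
\end{enumerate}
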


\begin{proof}
	Choose a projective resolution of $N$ in $\mod{A}$,
	\[\cdots\to P_{i+1}\xrightarrow{\tilde{f}_{i+1}}P_{i}\xrightarrow{\tilde{f}_{i}}P_{i-1}\to\cdots\to P_0\stackrel{\tilde{\pi}}{\twoheadrightarrow} N.\]
	Since $A$ and $N$ are free as $\Z/(p^2)$-modules, modding out $(p)$ we obtain a projective resolution of $N/(p)$ in $\mod{A/(p)}$,
	\[\cdots\to P_{i+1}/(p)\xrightarrow{f_{i+1}}P_{i}/(p)\xrightarrow{f_{i}}P_{i-1}/(p)\to\cdots\to P_0/(p)\stackrel{\pi}{\twoheadrightarrow} N/(p).\]
	A class $[c_1]\in \HML{1}{A/(p),M}{}$ identifies with $[c_1]\smile[\xi_A]\in\HML{3}{A/(p),M}$. Its image $\edge{N/(p)}([c_1]\smile[\xi_A])$ is represented by
	\[P_3/(p)\xrightarrow{\xi_A(f_2,f_3)}P_1/(p)\xrightarrow{c_1(f_1)}P_0/(p)\otimes_{A/(p)}M\xrightarrow{\pi\otimes_{A/(p)}M}N/(p)\otimes_{A/(p)}M.\]
	Hence, we must prove that this composite factors through $f_3\colon P_3 /(p)\to P_2/(p)$.

	We start by looking at $\xi_A(f_2,f_3)$. This morphism is characterized by the following equation, see \Cref{def:MacLane_cohomology_class}:
	\begin{equation*}
		s_{P_3,P_1}(f_2f_3)-s_{P_2,P_1}(f_2)s_{P_3,P_2}(f_3)=j_{P_1}\xi_A(f_2,f_3)q_{P_3}.
	\end{equation*}
	Since $f_2f_3=0$, $s_{P_3,P_1}(f_2f_3)=0$. Moreover, both $\tilde{f}_{i+1}$ and $s_{P_{i+1},P_i}(f_{i+1})$ represent $f_{i+1}$, hence
	\[s_{P_{i+1},P_i}(f_{i+1})=\tilde{f}_{i+1}+j_{P_i}g_{i+1}q_{P_{i+1}},\qquad
	g_{i+1}\colon P_{i+1}/(p)\to P_i/(p),\qquad i=1,2.\]
	Therefore, we have
	\begin{align*}
	j_{P_1}\xi_A(f_2,f_3)q_{P_3}&=(\tilde{f}_2+j_{P_1}g_2q_{P_{2}})(\tilde{f}_3+j_{P_2}g_3q_{P_{3}})\\
	&=\tilde{f}_2\tilde{f}_3+j_{P_1}g_2q_{P_{2}}\tilde{f}_3+\tilde{f}_2j_{P_2}g_3q_{P_{3}}+j_{P_1}g_2q_{P_{2}}j_{P_2}g_3q_{P_{3}}.
	\end{align*}
	We have that $\tilde{f}_2\tilde{f}_3=0$ since they are consecutive morphisms in a projective resolution of $N$. Moreover, $q_{P_i}\tilde{f}_{i+1}=f_{i+1} q_{P_{i+1}}$ and $\tilde{f}_{i+1}j_{P_{i+1}}=j_{P_i}f_{i+1}$ for $i=1,2$, by the naturality of \eqref{eq:natural_short_exact_sequence}. Furthermore, $q_{P_{2}}j_{P_2}=0$ since \eqref{eq:natural_short_exact_sequence} is exact. Substituting these equalities in the previous equation, we obtain
	\begin{align*}
	j_{P_1}\xi_A(f_2,f_3)q_{P_3}
	&=j_{P_1}g_2f_3q_{P_{3}}+j_{P_1}f_2g_3q_{P_{3}}\\
	&=j_{P_1}(g_2f_3+f_2g_3)q_{P_{3}}.
	\end{align*}
	This means that
	\[\xi_A(f_2,f_3)=g_2f_3+f_2g_3.\]
	The composite $g_2f_3$ obviously factors through $f_3$. We will conclude this proof by showing that the composite
	\[P_3/(p)\xrightarrow{g_3}P_2/(p)\xrightarrow{f_2}P_1/(p)\xrightarrow{c_1(f_1)}P_0/(p)\otimes_{A/(p)}M\xrightarrow{\pi\otimes_{A/(p)}M}N/(p)\otimes_{A/(p)}M\]
	vanishes. Actually, the composite of the last three arrows vanishes.

	The cocycle condition for $c_1$ at $(f_1,f_2)$ says that
	\begin{align*}
		c_1(f_1f_2)&=(f_1\otimes_{A/(p)}M)c_1(f_2)+c_1(f_1)f_2,
	\end{align*}
	see \Cref{rem:HH_Morita_and_cocycle}.
	Any $1$-cocycle is zero-normalized in the sense of \Cref{def:MacLane_cohomology_class} by the cocycle condition at $(0,0)$, so $c_1(f_1f_2)=c_1(0)=0$. Therefore,
	\begin{align*}
		(\pi\otimes_{A/(p)}M)c_1(f_1)f_2&=-(\pi\otimes_{A/(p)}M)(f_1\otimes_{A/(p)}M)c_1(f_2)\\
		&=-((\pi f_1)\otimes_{A/(p)}M)c_1(f_2)=0
	\end{align*}
	since $\pi$ and $f_1$ are consecutive morphisms in a projective resolution of $N/(p)$.
\end{proof}

We are finally ready to prove the main result of this section.

\begin{proof}[Proof of \Cref{thm:non-topological}]
	We prove by contradiction that the pretriangulated structure under consideration on $\proj{\Lambda}$ is not topological. Hence we deduce that it cannot be algebraic over any commutative ring either.

	Heller's isomorphism of exact functors $\delta\colon(\Omega^{3},-\id{\Omega^{4}})\cong((-)_\sigma,\varphi_\sigma)$ is given by elements
	\[\delta(M)\in\homst_{\Lambda}(\Omega^3M,M_{\sigma})=\ext^3_{\Lambda}(M,M\otimes_{\Lambda}\Lambda_{\sigma}),\qquad M\in\mod{\Lambda}.\]
	If the pretriangulated category structure on $\proj{\Lambda}$ determined by $\delta$ is topological then, by \cite[Proposition 7.2 and Remark 7.3]{muro_2020_first_obstructions_enhancing}, all these elements are obtained, through the corresponding edge morphisms
		\[\edge{M}\colon\HML{3}{\Lambda,\Lambda_{\sigma}}\longrightarrow\ext^3_{\Lambda}(M,M\otimes_{\Lambda}\Lambda_{\sigma}),\qquad M\in\mod{\Lambda},\]
	from a single element
	\[\eta\in\HML{3}{\Lambda,\Lambda_{\sigma}}.\]
	This element deserves to be called \emph{restricted universal Toda bracket}, by comparison with the \emph{restricted universal Massey product} of \cite[\S4.5.2]{jasso_keller_muro_2023_derived_auslanderiyama_correspondence} living in Hochschild cohomology instead of MacLane cohomology.

	The notions of \emph{universal Toda bracket} and \emph{universal Massey product} date back to \cite{baues_dreckmann_1989_cohomology_homotopy_categories} and \cite{benson_krause_schwede_2004_realizability_modules_tate}.

	Let $\ell\subset k$ be the prime subfield of $k$. We denote $\Lambda^{\ell}=\Lambda^{\Z}\otimes_{\Z}\ell$ and $\sigma^{\ell}=\sigma^{\Z}\otimes_{\Z}\ell$. If $\characteristic{k}=0$ then $\ell=\Q$ and
	\begin{align*}
		\HML{3}{\Lambda,\Lambda_{\sigma}}&=\HH{3}{\Lambda,\Lambda_{\sigma}}{\Q}\\
		&=\HH{3}{\Lambda^{\Q},\Lambda^{\Q}_{\sigma^{\Q}}}{\Q}\otimes_{\Q}k\oplus\bigoplus_{i=1}^3\HH{3-i}{\Lambda^{\Q},\Lambda^{\Q}_{\sigma^{\Q}}}{\Q}\otimes_{\Q}\HH{i}{k,k}{\Q}.
	\end{align*}
	Here we use \Cref{prop:comparison_SH_HML_char_0} and \Cref{prop:HH_field_extensions} \eqref{it:field_extension_over_small}. By \Cref{prop:HH_edge_vanishing}, 
	all edge morphisms vanish on all but the first direct factor. 
	Consider the component of the restricted universal Toda bracket $\eta$ in that first direct factor,
	\[\eta_1\in \HH{3}{\Lambda^{\Q},\Lambda^{\Q}_{\sigma^{\Q}}}{\Q}\otimes_{\Q}k=\HH{3}{\Lambda,\Lambda_{\sigma}}{k}.\]
	Here we use \Cref{prop:HH_field_extensions} \eqref{it:field_extension_over_big}.
	Then, by the aforementioned vanishing, for all $M$ in $\mod{\Lambda}$, $\delta(M)$ is the image of $\eta_1$ through the edge morphism
	\[\edge{M}\colon\HH{3}{\Lambda,\Lambda_{\sigma}}{k}\longrightarrow\ext^3_{\Lambda}(M,M\otimes_{\Lambda}\Lambda_{\sigma}).\]

	If $\characteristic{k}=p>0$ then $\ell=\Fp{p}$ and
	\begin{multline*}
		\HML{3}{\Lambda,\Lambda_{\sigma}}=\HH{3}{\Lambda,\Lambda_{\sigma}}{\Fp{p}}\oplus\HML{1}{\Lambda,\Lambda_{\sigma}}=\\
		\HH{3}{\Lambda^{\Fp{p}},\Lambda^{\Fp{p}}_{\sigma^{\Fp{p}}}}{\Fp{p}}\otimes_{\Fp{p}}k\oplus\bigoplus_{i=1}^3\HH{3-i}{\Lambda^{\Fp{p}},\Lambda^{\Fp{p}}_{\sigma^{\Fp{p}}}}{\Fp{p}}\otimes_{\Fp{p}}\HH{i}{k,k}{\Fp{p}}\oplus\HML{1}{\Lambda,\Lambda_{\sigma}}.
	\end{multline*}
	Here, we use \Cref{prop:direct_sum_HML} and again \Cref{prop:HH_field_extensions} \eqref{it:field_extension_over_small}. The hypothesis of \Cref{prop:direct_sum_HML} holds because 
	\[\Lambda= \Lambda^{\Z}\otimes_{\Z} k = (\Lambda^{\Z}\otimes_{\Z} W_2(k))/(p),\]
	where $W_2(k)$ is the ring of truncated $p$-typical Witt vectors of length $2$ over the perfect field $k$. The ring $\Lambda^{\Z}\otimes_{\Z} W_2(k)$ is a $\Z/(p^2)$-algebra which is free as a $\Z/(p^2)$-module because $W_2(k)$ is and $\Lambda^{\Z}$ is a free $\Z$-module.
	Again, all edge morphisms vanish on all but the first direct factor of the latest direct sum decomposition. 
		Consider the component of the restricted universal Toda bracket $\eta$ in that first direct factor,
	\[\eta_1\in \HH{3}{\Lambda^{\Fp{p}},\Lambda^{\Fp{p}}_{\sigma^{\Fp{p}}}}{\Fp{p}}\otimes_{\Fp{p}}k=\HH{3}{\Lambda,\Lambda_{\sigma}}{k}.\]
		Here we use \Cref{prop:HH_field_extensions} \eqref{it:field_extension_over_big} once more.
		We conclude as above that, for all $M$ in $\mod{\Lambda}$, $\delta(M)$ is the image of $\eta_1$ through the edge morphism
	\[\edge{M}\colon\HH{3}{\Lambda,\Lambda_{\sigma}}{k}\longrightarrow\ext^3_{\Lambda}(M,M\otimes_{\Lambda}\Lambda_{\sigma}).\]

	In any characteristic, $\eta_1$ is an edge unit in the sense of \cite[Definition 6.4]{muro_2022_enhanced_finite_triangulated} by the reinterpretation of Heller's isomorphism of exact functors in \cite[Proposition 3.2]{muro_2020_first_obstructions_enhancing}. This implies that $\Omega^3_{\Lambda^e}(\Lambda)\cong\Lambda_{\sigma}$ in $\modst{\Lambda^e}$, see \cite[Propositions 5.7 and 6.5]{muro_2022_enhanced_finite_triangulated}. Among the hypotheses of \Cref{thm:non-algebraic}, which are also assumed here in \Cref{thm:non-topological}, we have that $\Lambda$ is basic, connected, non-separable and self-injective, so we also have an isomorphism $\Omega^3_{\Lambda^e}(\Lambda)\cong\Lambda_{\sigma}$ in $\mod{\Lambda^e}$, see the proof of \cite[Proposition 9.8]{muro_2022_enhanced_finite_triangulated}. The same argument applied to the stable bimodule isomorphism $\Omega^3_{\Lambda^e}(\Lambda)\cong\Lambda_{\nu}$ in the statement of \Cref{thm:non-algebraic} yields an isomorphism in $\mod{\Lambda^e}$. Hence $\Lambda_{\nu}\cong\Lambda_{\sigma}=\Lambda_{\nu\phi}$ in $\mod{\Lambda^e}$, so $M_{\nu}\cong M_{\nu\phi}$ naturally in $M\in\mod{\Lambda}$. Therefore $[\nu]=[\nu\phi]=[\nu][\phi]\in\Out{\Lambda}$, forcing $[\phi]$ to be trivial. This contradicts the hypothesis of \Cref{thm:non-algebraic} which says that $[\phi]$ is non-trivial.
\end{proof}

\printbibliography

\end{document}